\newtheorem{theorem}{Theorem}[section]
\newtheorem{lemma}[theorem]{Lemma}
\newtheorem{corollary}[theorem]{Corollary}
\theoremstyle{definition}
\theoremstyle{remark}
\newtheorem{remark}[theorem]{Remark}
\numberwithin{equation}{section}
\begin{document}

\title[Quantization errors for in-homogeneous self-similar measures]{Asymptotic quantization errors for in-homogeneous self-similar measures supported on self-similar sets}
\author{Sanguo Zhu}
\address{School of Mathematics and Physics, Jiangsu University of Technology,
Changzhou 213001, China.} \email{sgzhu@jsut.edu.cn}
\thanks{The author is supported by China Scholarship Council (No. 201308320049).}
\subjclass[2000]{Primary 28A80, 28A78; Secondary 94A15}
\keywords{condensation system, in-homogeneous self-similar measures, quantization coefficient, quantization dimension}

\begin{abstract}
We study the quantization for a class of in-homogeneous self-similar measures $\mu$ supported on self-similar sets. Assuming the open set condition for the corresponding iterated function system, we prove the existence of the quantization dimension for $\mu$ of order $r\in(0,\infty)$ and determine its exact value $\xi_r$. Furthermore, we show that, the $\xi_r$-dimensional lower quantization coefficient for $\mu$ is always positive and the upper one can be infinite. We also give a sufficient condition to ensure the finiteness of the upper quantization coefficient.
\end{abstract}

\maketitle

\section{Introduction}

With a deep background in information theory, the quantization problem for probability measures has been studied intensively in the past decades. One of the main aims for mathematicians is to study the asymptotic errors in the approximation of a given probability measure with discrete probability measures of finite support. For rigorous mathematical foundations of this theory, we refer to Graf and Luschgy \cite{GL:00}. Further theoretical results and promising applications are contained in \cite{BW:82,GL:01,GL:04,GL:12,GN:98,Gru:04,PG:98,PK:01,Za:82}. Next, we recall some important objects in quantization theory.

We write $\mathcal{D}_n:=\{\alpha\subset\mathbb{R}^q:1\leq{\rm card}(\alpha)\leq n\}$ for every $n\in\mathbb{N}$.
Let $P$ be a Borel probability measure on $\mathbb{R}^q$ and let $0\leq r<\infty$. The $n$th
quantization error for $P$ of order $r$ is given by \cite{GL:00,GL:04}
\begin{eqnarray}
e_{n,r}(P):=\left\{ \begin{array}{ll}
\inf_{\alpha\in\mathcal{D}_{n}}\big(\int d(x,\alpha)^{r}dP(x)\big)^{1/r},\;\;\;\;\;\; r>0,\\
\inf_{\alpha\in\mathcal{D}_{n}}\exp\int\log d(x,\alpha)dP(x),\;\;\;\;\; r=0.
\end{array}\right.\label{quanerror}
\end{eqnarray}
If the infimum in (\ref{quanerror}) is attained at some
$\alpha\subset R^{d}$ with $1\leq{\rm card}(\alpha)\leq n$,  we call
$\alpha$ an $n$-optimal set of $P$ of order $r$.  The collection
of all the $n$-optimal sets of order $r$ is denoted by
$C_{n,r}(P)$.

According to \cite{GL:00}, $e_{n,r}(P)$ equals the error in the approximation of $P$ with discrete measures supported on at most $n$ points, in the sense of $L_r$ metrics. With some natural restrictions, $e_{n,r}(P)$ tends to $e_{n,0}(\mu)$ as $r\to 0$ \cite{GL:04}. We also call $e_{n,0}(P)$ the $n$th geometric mean error for $P$. The upper and lower quantization dimension for $P$ of order $r$, which are defined below, characterize the asymptotic quantization error in a natural manner:
\begin{equation*}
\overline{D}_{r}(P):=\limsup_{n\to\infty}\frac{\log n}{-\log
e_{n,r}(P)};\;\;\underline{D}_{r}(P):=\liminf_{n\to\infty}\frac{\log
n}{-\log e_{n,r}(P)}.
\end{equation*}
If $\overline{D}_{r}(P)=\underline{D}_{r}(P)$, we call the common value the
quantization dimension of $P$ of order $r$ and denote it by $D_r(P)$. In case that $D_r(P)=:s$ exists, we are further concerned with the $s$-dimensional upper and lower quantization coefficient which are defined below (cf. \cite{GL:00,PK:01}):
\begin{eqnarray*}
\overline{Q}_r^s(P):=\limsup_{n\to\infty}n^{\frac{1}{s}}e_{n,r}(P),\;\;
\underline{Q}_r^s(P):=\liminf_{n\to\infty}n^{\frac{1}{s}}e_{n,r}(P).
\end{eqnarray*}
These two quantities provide us with more accurate information for the asymptotics of the quantization error than the quantization dimension. So far, the upper and lower quantization coefficients have been well studied for absolutely continuous measures \cite[Theorem 6.2]{GL:00}  and some classes of fractal measures, including self-similar measures \cite{GL:00} and dyadic homogeneous Cantor measures \cite{Kr:08}. In the present paper, we will further study the quantization problem for in-homogeneous self-similar measures. For this purpose, we need to recall some related definitions.

Let $(f_i)_{i=1}^N$ be a family of contractive similitudes on
$\mathbb{R}^q$ with contraction ratios $(s_i)_{i=1}^N$.
According to \cite{Hu:81}, there exists a
unique non-empty compact subset $E$ of $\mathbb{R}^q$ such that
$E=f_1(E)\cup f_2(E)\cup\cdots\cup f_N(E)$.
The set $E$ is called the self-similar set associated with $(f_i)_{i=1}^N$.

We say that $(f_i)_{i=1}^N$ satisfies the strong separation condition (SSC) if the sets
$f_i(E),i=1,\cdots ,N$, are pairwise disjoint; we say that it
satisfies the open set condition (OSC) if there exists a non-empty open
set $U$ such that $f_i(U)\cap f_j(U)=\emptyset$ for all $i\neq j$
and $f_i(U)\subset U$ for all $i=1,\cdots,N$; if this open set $U$ can be chosen such that $U\cap E\neq\emptyset$, then we say $(f_i)_{i=1}^N$ satisfies the strong open set condition (SOSC). By \cite{Schief:94}, the OSC and SOSC are equivalent for the family $(f_i)_{i=1}^N$ of contractive similitudes.

Now let $\nu$ be a Borel probability measure on $\mathbb{R}^q$ with compact support. Let $(p_i)_{i=0}^N$ be a probability vector with $p_i>0$ for all $0\leq i\leq N$. Following
\cite{Bar:88,Las:06}, we call $((f_i)_{i=1}^N,(p_i)_{i=0}^N,\nu)$ a
condensation system. There exist (cf. \cite{Bar:88,Las:06}) a unique
measure $\mu$ and a unique non-empty compact set $K$ satisfying
\begin{equation}\label{attractingmeasure}
\mu=p_0\nu+\sum_{i=1}^Np_i\mu\circ f_i^{-1},\;K={\rm supp}(\nu)\cup\bigg(\bigcup_{i=1}^Nf_i(K)\bigg).
\end{equation}
The measure $\mu$ is called the attracting measure for $((f_i)_{i=1}^N,(p_i)_{i=0}^N,\nu)$ and the set $K(={\rm
supp}(\mu))$ the attractor for this system. In \cite{Olsen:08}, such a measure $\mu$ is also termed an in-homogeneous self-similar measure (ISM); in there, one may find some interesting interpretations for the term "in-homogeneous".
General ISMs may have very complicated behaviors and we can hardly obtain accurate information for the asymptotic quantization errors. We will focus on a particular class of such measures where the measures $\nu$ as involved in (\ref{attractingmeasure}) are self-similar associated with $(f_i)_{i=1}^N$.

\begin{remark}{\rm
Let $p_0=0$. Then $(p_i)_{i=1}^N$ is a probability vector and the measure $\mu$ reduces to a self-similar measure, namely, $\mu$ is the unique Borel probability measure satisfying $\mu=p_1\circ f_1^{-1}+\ldots+p_N\mu\circ f_N^{-1}$.
Assuming the OSC for $(f_i)_{i=1}^N$, Graf and Luschgy proved that $0<\underline{Q}_r^{k_r}(\mu)\leq\overline{Q}_r^{k_r}(\mu)<\infty$ (see \cite{GL:01}),
where the number $k_r$ is given by
\begin{eqnarray*}
\sum_{i=1}^{N}(p_{i}s_i^{r})^{\frac{k_{r}}{k_{r}+r}}=1,\; r>0.
\end{eqnarray*}
}\end{remark}

In \cite{Zhu:08}, the author has studied a class of ISMs, where the measure $\nu$  is the self-similar measure associated with $(f_i)_{i=1}^N$ and a probability vector $(t_i)_{i=1}^N$, with $t_i>0$ for all $1\leq i\leq N$. In this
case, we have that $K=E$ by the uniqueness of the compact set $K$. Assuming the SSC for $(f_i)_{i=1}^N$, we obtained a characterization for the upper and lower quantization dimension for the ISMs $\mu$ \cite{Zhu:08}. Recently, Roychowdhury gave some bounds for these dimensions \cite{RMK:13}. In the present paper, we further prove that the quantization dimension of $\mu$ of order $r$ exists for every $r\in(0,\infty)$, and we determine the exact values of these dimensions. Besides, we will also consider the finiteness and positivity of the upper and lower quantization coefficient in the exact dimension. For every $r>0$, let $\xi_{1,r},\xi_{2,r}$, be implicitly given by
\begin{equation*}
\sum_{i=1}^N(t_is_i^r)^{\frac{\xi_{1,r}}{\xi_{1,r}+r}}=1;\;\;\sum_{i=1}^N(p_is_i^r)^{\frac{\xi_{2,r}}{\xi_{2,r}+r}}=1.
\end{equation*}
Set $\xi_r:=\max\{\xi_{1,r},\xi_{2,r}\}$. As the main result of the paper, we will prove
\begin{theorem}\label{mthm1}
Assume that $(f_i)_{i=1}^N$ satisfies the OSC. Let $\nu$ be the self-similar measures associated with $(f_i)_{i=1}^N$ and $(t_i)_{i=1}^N$. Let $\mu$ be the ISM as defined in (\ref{attractingmeasure}) with $p_i>0$ for all $0\leq i\leq N$. Then
for every $r>0$, $D_r(\mu)$ exists and equals $\xi_r$ and $\underline{Q}_r^{\xi_r}(\mu)>0$. Moreover, we have

(a) if $\xi_{1,r}>\xi_{2,r}$, then $\overline{Q}_r^{\xi_r}(\mu)<\infty$;

(b) if $\xi_{1,r}=\xi_{2,r}$, then we have that $\underline{Q}^{\xi_r}_r(\mu)=\infty$.

\end{theorem}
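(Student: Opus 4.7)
My plan hinges on iterating the fixed-point equation~(\ref{attractingmeasure}) to depth $k$, which produces the decomposition
\begin{equation*}
\mu \;=\; p_0\sum_{l=0}^{k-1}\sum_{|\sigma|=l}p_\sigma\,\nu\circ f_\sigma^{-1} \;+\; \sum_{|\sigma|=k}p_\sigma\,\mu\circ f_\sigma^{-1},
\end{equation*}
where I use the usual word notation $p_\sigma=p_{\sigma_1}\cdots p_{\sigma_l}$, $f_\sigma=f_{\sigma_1}\circ\cdots\circ f_{\sigma_l}$, $s_\sigma=s_{\sigma_1}\cdots s_{\sigma_l}$. This realises $\mu$ as an infinite convex combination of rescaled copies of $\nu$, which is a genuine self-similar measure on $E$ whose quantization is controlled by the Graf--Luschgy remark already cited, giving positive lower and finite upper quantization coefficients at exponent $\xi_{1,r}$. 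Writing $a:=\xi_{1,r}/(\xi_{1,r}+r)$, the strict monotonicity of $x\mapsto\sum_i(p_is_i^r)^{x/(x+r)}$ means $\sum_i(p_is_i^r)^a$ is less than, equal to, or greater than $1$ precisely when $\xi_{1,r}$ exceeds, equals, or is less than $\xi_{2,r}$; this trichotomy drives the three regimes of the theorem.

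For the upper bound I assign $n_\sigma\propto(p_\sigma s_\sigma^r)^a$ codewords to piece $\sigma$, drawn from a scaled copy of an almost-optimal $n_\sigma$-quantizer for $\nu$, and bound the $k$-th level residual by its total mass $(1-p_0)^k$ times $(\mathrm{diam}\,K)^r$. A standard Lagrange calculation gives total error $\lesssim\bigl[\sum_\sigma(p_\sigma s_\sigma^r)^a\bigr]^{1/a}\,n^{-r/\xi_{1,r}}$. When $\xi_{1,r}>\xi_{2,r}$ the geometric series over all words converges, producing the finite $\overline Q^{\xi_r}_r(\mu)$ of part~(a); when $\xi_{2,r}\geq\xi_{1,r}$ I truncate at a level $k=k(n)$ chosen so that every retained $n_\sigma\geq1$ while the residual stays negligible, and recover $e_{n,r}(\mu)\lesssim n^{-1/\xi_r}$ (with at worst a logarithmic blow-up when the exponents coincide). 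For the lower bound I invoke the SOSC, which yields a positive pairwise separation $\delta_k>0$ between the level-$k$ cylinders $f_\sigma(E)$. Since $\mu\ge p_0\sum_{|\sigma|=k}p_\sigma\,\nu\circ f_\sigma^{-1}$, the Graf--Luschgy codebook-splitting argument partitions any $n$-codebook $\alpha$ according to the nearest cylinder and produces
\begin{equation*}
e_{n,r}(\mu)^r \;\gtrsim\; c_k\sum_{|\sigma|=k}p_\sigma s_\sigma^r\,e_{n_\sigma,r}(\nu)^r, \qquad \sum_\sigma n_\sigma\le n,
\end{equation*}
with $c_k$ depending on $k$ and $\delta_k$. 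Reverse H\"older then gives $e_{n,r}(\mu)^r\gtrsim c_k[\sum_{|\sigma|=k}(p_\sigma s_\sigma^r)^a]^{1/a}\,n^{-r/\xi_{1,r}}$, and this handles both the dimension assertion and the positivity of $\underline Q^{\xi_r}_r(\mu)$: when $\xi_r=\xi_{1,r}$ fix any $k$; when $\xi_r=\xi_{2,r}>\xi_{1,r}$ the bracket grows geometrically in $k$ with ratio $\sum_i(p_is_i^r)^a>1$, and choosing $k=k(n)$ large enough converts this into the rate $n^{-r/\xi_{2,r}}$.

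For part~(b), $\xi_{1,r}=\xi_{2,r}$, the bracket at each level is \emph{exactly} $1$, so summing the lower estimate across the admissible levels $l\le k$ of the iterated decomposition produces a gain proportional to $k$ itself. Taking $k=k(n)$ of logarithmic order in $n$ then forces $n^{1/\xi_r}e_{n,r}(\mu)\to\infty$, i.e.\ $\underline Q^{\xi_r}_r(\mu)=\infty$. The most delicate step I anticipate is precisely this critical case: the separation constant $c_k$ typically decays geometrically in $k$, and one must choose $k(n)$ so that the arithmetically accumulating factor from summing levels strictly dominates $c_k^{-1}$ --- this is where the interaction between the $(t_i)$-structure of $\nu$ and the $(p_i)$-structure of the condensation weights has to be tracked carefully. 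Once the balancing is achieved, the matching upper and lower bounds from the same decomposition yield both the existence $D_r(\mu)=\xi_r$ and the stated behaviour of the quantization coefficients.
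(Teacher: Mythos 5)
Your proposal has a genuine gap at the heart of the lower bound, and it is precisely the step where the paper has to work hardest. You write that ``the SOSC \ldots yields a positive pairwise separation $\delta_k>0$ between the level-$k$ cylinders $f_\sigma(E)$.'' That is false in general under OSC/SOSC: the cylinders $E_\sigma$, $|\sigma|=k$, may touch (think of the Sierpi\'nski gasket, whose three first-level pieces share corner points, so $\delta_1=0$). The SOSC guarantees an open set $U$ meeting $E$ with $f_i(U)$ pairwise disjoint, but this does not separate the $E_\sigma$ themselves. Without an actual positive separation the Graf--Luschgy codebook-splitting inequality you invoke cannot be applied as stated, and the entire lower bound --- and with it the positivity of $\underline Q_r^{\xi_r}(\mu)$, the lower half of $D_r(\mu)=\xi_r$, and all of part (b) --- loses its foundation. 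The paper's Lemma~\ref{errorestimate} resolves exactly this: it fixes a word $\tau$ with $E_\tau\subset U$ (so $\delta:=d(E_\tau,U^c)>0$) and passes to the \emph{sub}-cylinders $E_{\sigma\ast\tau}$, which \emph{are} pairwise separated with $d(E_{\sigma\ast\tau},E_{\omega\ast\tau})\ge\delta\max\{s_\sigma,s_\omega\}$, at the controlled cost of a factor $\underline\eta_r^{2|\tau|}$ independent of $k$. Your argument needs this trick (or an equivalent), and without it there is no valid lower bound under OSC.

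There is a secondary concern worth flagging. You work with fixed-level words $\Omega_k$ rather than with the adapted antichains $\Lambda_{k,r}=\{\sigma: h(\sigma^{(r)}_{-1})\ge k^{-1}\underline\eta_r>h(\sigma)\}$ that the paper uses; the latter are designed so that $h(\sigma)\asymp k^{-1}$ is uniform across the antichain, which makes the H\"older/reverse-H\"older balancing in the critical case $\xi_{1,r}=\xi_{2,r}$ behave cleanly and gives $\sum_{\sigma\in\Lambda_{k,r}}h(\sigma)^{\xi_r/(\xi_r+r)}\gtrsim l_{1k}$ directly (Lemma~\ref{s4}). With fixed $\Omega_k$ you must trade off the arithmetically growing factor $k$ against a $k$-dependent separation constant and against the residual $(1-p_0)^k$, a balancing that you yourself flag as ``the most delicate step'' but leave unresolved. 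In a correct write-up you would either carry out that balancing carefully or, more naturally, switch to stopping-time antichains as the paper does. The upper-bound Lagrange allocation you sketch, by contrast, does not need separation and is essentially sound in spirit (modulo the $n_\sigma\ge1$ truncation, which the paper handles implicitly through $\Lambda_{k,r}$), but it does not rescue the proof without a valid lower bound.
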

As an immediate consequence of Theorem \ref{mthm1}, we have
\begin{corollary}
Let $\nu$ be the self-similar measure associated with $(f_i)_{i=1}^N$ and the probability vector $(s_i^{d_0})_{i=1}^N$, where $d_0$ is the unique positive real number satisfying $\sum_{i=1}^Ns_i^{d_0}=1$. Then, we have
\begin{eqnarray*}
D_r(\mu)=d_0,\;\;0<\underline{Q}_r^{d_0}(\mu)\leq\overline{Q}_r^{d_0}(\mu)<\infty.
\end{eqnarray*}
\end{corollary}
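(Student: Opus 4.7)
The plan is to reduce the corollary directly to Theorem \ref{mthm1} by computing $\xi_{1,r}$ and $\xi_{2,r}$ explicitly for the weighting $t_i=s_i^{d_0}$, and then checking which case of the theorem applies. First, substituting $t_i=s_i^{d_0}$ into the defining relation for $\xi_{1,r}$ gives $t_is_i^r=s_i^{d_0+r}$, hence
\begin{equation*}
\sum_{i=1}^N s_i^{(d_0+r)\xi_{1,r}/(\xi_{1,r}+r)}=1.
\end{equation*}
Since $d_0$ is the unique positive root of $\sum_{i=1}^N s_i^{t}=1$ and this function is strictly decreasing in $t$, comparing exponents yields $(d_0+r)\xi_{1,r}/(\xi_{1,r}+r)=d_0$, which solves to $\xi_{1,r}=d_0$.

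Next, I would establish the strict inequality $\xi_{2,r}<d_0$. The crucial input is that $p_0>0$, so that $\sum_{i=1}^N p_i=1-p_0<1$. Set $s_0:=d_0/(d_0+r)$ and apply H\"older's inequality with conjugate exponents $(d_0+r)/d_0$ and $(d_0+r)/r$ to the factorization $(p_is_i^r)^{s_0}=p_i^{s_0}(s_i^{d_0})^{r/(d_0+r)}$:
\begin{equation*}
\sum_{i=1}^N(p_is_i^r)^{s_0}\leq\Bigl(\sum_{i=1}^N p_i\Bigr)^{d_0/(d_0+r)}\Bigl(\sum_{i=1}^N s_i^{d_0}\Bigr)^{r/(d_0+r)}=(1-p_0)^{s_0}<1.
\end{equation*}
Since each $p_is_i^r<1$, the function $s\mapsto\sum_{i=1}^N(p_is_i^r)^s$ is continuous and strictly decreasing and equals $1$ exactly at $s=\xi_{2,r}/(\xi_{2,r}+r)$; the display therefore forces $\xi_{2,r}/(\xi_{2,r}+r)<s_0$, i.e.\ $\xi_{2,r}<d_0$.

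Combining the two computations, $\xi_r=\max\{\xi_{1,r},\xi_{2,r}\}=d_0$ and the strict gap $\xi_{1,r}>\xi_{2,r}$ places us in case (a) of Theorem \ref{mthm1}. Consequently $D_r(\mu)=d_0$, the lower quantization coefficient $\underline{Q}_r^{d_0}(\mu)$ is positive, and the upper one $\overline{Q}_r^{d_0}(\mu)$ is finite, which is exactly the conclusion of the corollary. The only genuinely non-trivial step is the H\"older estimate, and the real content it carries is that the in-homogeneous perturbation, i.e.\ the presence of the condensation mass $p_0\nu$, can never push the quantization dimension above the similarity dimension $d_0$ of the underlying self-similar set when $\nu$ already carries the Bernoulli weights $s_i^{d_0}$.
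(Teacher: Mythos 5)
Your proof is correct and follows essentially the same route as the paper: identify $\xi_{1,r}=d_0$ (the paper cites \cite[Theorem 3.1]{GL:01}, you derive it directly, but this is a one-line calculation), then use the same H\"older estimate with exponents $\tfrac{d_0+r}{d_0},\tfrac{d_0+r}{r}$ to get $\sum_i(p_is_i^r)^{d_0/(d_0+r)}\leq(1-p_0)^{d_0/(d_0+r)}<1$, deduce $\xi_{2,r}<d_0$, and invoke Theorem \ref{mthm1}(a).
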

\begin{proof}
By \cite[Theorem 3.1]{GL:01}, one sees that $\xi_{1,r}=d_0$. By H\"{o}lder's inequality,
\begin{eqnarray*}
\sum_{i=1}^N(p_is_i^r)^{\frac{d_0}{d_0+r}}\leq\bigg(\sum_{i=1}^Np_i\bigg)^{\frac{d_0}{d_0+r}}
\bigg(\sum_{i=1}^Ns_i^{d_0}\bigg)^{\frac{r}{d_0+r}}=(1-p_0)^{\frac{d_0}{d_0+r}}<1.
\end{eqnarray*}
This implies that $\xi_{1,r}=d_0>\xi_{2,r}$ and the corollary follows by Theorem \ref{mthm1}.
\end{proof}

Let us make some remarks on our main result. The first one is about the comparison between $\xi_{1,r}$ and $\xi_{2,r}$.
\begin{remark}\label{g14}{\rm
In according with different choices of the probability vectors $(t_i)_{i=1}^{N}$ and $(p_i)_{i=0}^N$, we may have $\xi_{1,r}>\xi_{2,r}$, or $\xi_{1,r}\leq\xi_{2,r}$. Indeed, let
\begin{eqnarray*}
s_1=s_2=\cdots=s_N=:c;\; (t_1,t_2,\ldots,t_N)\neq(N^{-1},\ldots,N^{-1}).
\end{eqnarray*}
Then, by \cite[Theorem 3.1]{GL:01}, we have that $\xi_{1,r}=D_r(\nu)<-\log N/\log c$. Hence,
\begin{eqnarray*}
N^{\frac{1}{\xi_{1,r}}}c>1,\;\;{\rm implying}\;\;N^{-\frac{r}{\xi_{1,r}}}c^{-r}<1.
\end{eqnarray*}
Let $0<p_0<1-N^{-\frac{r}{\xi_{1,r}}}c^{-r}$ and $p_i:=N^{-1}(1-p_0)$ for all $1\leq i\leq N$. Then
\begin{eqnarray*}
\sum_{i=1}^N(p_is_i^r)^{\frac{\xi_{1,r}}{\xi_{1,r}+r}}&=&N (N^{-1}(1-p_0)c^r)^{\frac{\xi_{1,r}}{\xi_{1,r}+r}}=N^{\frac{r}{\xi_{1,r}+r}}((1-p_0)c^r)^{\frac{\xi_{1,r}}{\xi_{1,r}+r}}\\
&>&N^{\frac{r}{\xi_{1,r}+r}}(N^{-\frac{r}{\xi_{1,r}}}c^{-r}c^r)^{\frac{\xi_{1,r}}{\xi_{1,r}+r}}=1.
\end{eqnarray*}
Hence, $\xi_{1,r}<\xi_{2,r}$. If $p_0\geq 1-N^{-\frac{r}{\xi_{1,r}}}c^{-r}$, then we have that $\xi_{1,r}\geq\xi_{2,r}$.
}\end{remark}

\begin{remark}{\rm
one can only get some fragmentary conclusions by applying \cite[Theorem 4.14]{GL:00}. In fact, by \cite[Lemma 4.14]{GL:00}, one easily gets
\begin{eqnarray}\label{z14}
e^r_{n,r}(\mu)\left\{\begin{array}{ll}\geq p_0e^r_{n,r}(\nu)+\sum_{i=1}^Np_is_i^re^r_{n,r}(\mu)\\
\leq p_0e^r_{[\frac{n}{N+1}],r}(\nu)+\sum_{i=1}^Np_is_i^re^r_{[\frac{n}{N+1}],r}(\mu)
\end{array}\right..
\end{eqnarray}
For every $s>0$, by the preceding inequality, we have
\begin{eqnarray}\label{g23}
&&\bigg(1-\sum_{i=1}p_is_i^r\bigg)^{-1}\underline{Q}_r^s(\nu)\leq\underline{Q}_r^s(\mu)\leq\overline{Q}_r^s(\mu)\nonumber\\&&\leq p_0(N+1)^{\frac{r}{s}}\overline{Q}_r^s(\nu)+\overline{Q}_r^s(\mu)(N+1)^{\frac{r}{s}}\sum_{i=1}^Np_is_i^r.
\end{eqnarray}
Even if $(N+1)^{\frac{r}{\xi_r}}\sum_{i=1}^Np_is_i^r<1$, we can not get any useful information because $\overline{Q}_r^{\xi_r}(\mu)$ is possibly infinite.
}\end{remark}

\section{Notations and preliminary facts}

For $\Omega:=\{1,\ldots, N\}$, we write
$\Omega_n:=\Omega^n,\Omega^*:=\bigcup_{n=1}^\infty\Omega_n$. We define $|\sigma|:=n$ for $\sigma\in\Omega_n$ and $\sigma|_0=\theta:=$empty word. For
any $\sigma\in\Omega^*$ with $|\sigma|\geq n$, we
write $\sigma|_n:=(\sigma_1,\ldots,\sigma_n)$.
For $1\leq h<n$ and $\sigma=(\sigma_1,\ldots,\sigma_n)\in\Omega_n$, we set
\begin{eqnarray*}
\sigma^{(l)}_{-h}:=(\sigma_{h+1},\ldots,\sigma_n),\;\;\sigma^{(r)}_{-h}:=(\sigma_1,\ldots,\sigma_{n-h});
\end{eqnarray*}
we also define $\sigma^{(l)}_{-n}=\sigma^{(r)}_{-n}:=\theta$ and $\sigma^{(l)}_{0}=\sigma^{(r)}_{0}:=\sigma$.

For $\sigma,\tau\in\Omega^*$, we write
$\sigma\ast\tau:=(\sigma_1,\ldots,\sigma_{|\sigma|},\tau_1,\ldots,\tau_{|\tau|})$. If $\sigma,\tau\in\Omega^*$ and
$|\sigma|<|\tau|,\sigma=\tau|_{|\sigma|}$, then we write $\sigma\prec\tau$ and call $\sigma$ a predecessor of $\tau$. Two words
$\sigma,\tau\in\Omega^*$ are said to be incomparable if we have neither $\sigma\prec\tau$
nor $\tau\prec\sigma$. A finite set $\Gamma\subset\Omega^*$ is
called a finite anti-chain if any two words $\sigma,\tau$ in
$\Gamma$ are incomparable. A finite anti-chain is said to be maximal if
any word $\sigma\in\Omega^{\mathbb{N}}$ has a predecessor in $\Gamma$. For a word $\sigma=(\sigma_1,\ldots,\sigma_n)\in\Omega_n$, we define $f_{\sigma}:=f_{\sigma_1}\circ\cdots\circ
f_{\sigma_n},E_\sigma:=f_\sigma(E)$ and set
\begin{equation*}
s_\sigma:=\prod_{h=1}^ns_{\sigma_h},\;
t_\sigma:=\prod_{h=1}^nt_{\sigma_h},\;p_\sigma:=\prod_{h=1}^np_{\sigma_h}.
\end{equation*}
For the empty word $\theta$, we also define $p_\theta=t_\theta=s_\theta=1$. Without loss of generality, we assume that the diameter of $E$ equals $1$. Then the diameter of $E_\sigma$ is equal to $s_\sigma$ for every $\sigma\in\Omega^*$.

Let $h(\sigma)$ and $h^{(j)}(\sigma),j=1,2$ be as defined in \cite{Zhu:08}. Namely,
\begin{equation*}
h^{(1)}\big((i)\big):=p_0t_is_i^r,\;\;h^{(2)}\big((i)\big):=p_is_i^r,\;\;1\leq i\leq N.
\end{equation*}
Inductively, for $k\geq 2$ and $\sigma\in\Omega_k$, we define
\begin{equation}\label{hsigma}
h^{(1)}({\tau})=h^{(1)}({\tau^{(r)}_{-1}})t_is_i^r+h^{(2)}({\tau^{(r)}_{-1}})p_0t_is_i^r,\;\;
h^{(2)}({\tau})=h^{(2)}({\tau^{(r)}_{-1}})p_is_i^r.
\end{equation}
We finally define $h(\sigma):=h^{(1)}(\sigma)+h^{(2)}(\sigma)$.
These definitions are given according to the behavior of the ISM $\mu$.

For every $\sigma\in\Omega^*$, let $\mu^{(i)}(E_\sigma):=h^{(i)}(\sigma)s_\sigma^{-r},i=1,2$. Then, we have
\begin{eqnarray*}
&&\mu^{(1)}(E_i)=p_0t_i,\;\mu^{(2)}(E_i)=p_i\;\;{\rm for}\;\;1\leq i\leq N;\\
&&\mu^{(1)}(E_\sigma)=\mu^{(1)}(E_{\sigma^{(r)}_{-1}})t_{\sigma_{|\sigma|}}+\mu^{(2)}(E_{\sigma^{(r)}_{-1}})p_0t_{\sigma_{|\sigma|}},\\
&&\mu^{(2)}(E_\sigma)=\mu^{(2)}(E_{\sigma^{(r)}_{-1}})p_{\sigma_{|\sigma|}},\;\;|\sigma|\geq 2.
\end{eqnarray*}
Inductively, One can see that $\mu^{(2)}(E_\sigma)=p_\sigma$ and
\begin{eqnarray}\label{z8}
\mu^{(1)}(E_\sigma)&=&p_0t_\sigma+p_0p_{\sigma_1} t_{\sigma^{(l)}_{-1}}+p_0p_{\sigma|_2}t_{\sigma^{(l)}_{-2}}+\cdots+p_0p_{\sigma^{(r)}_{-1}} t_{\sigma_k}\nonumber\\&=&\sum_{h=0}^{|\sigma|-1}p_0p_{\sigma|_h}t_{\sigma^{(l)}_{-h}},\;\;\sigma=(\sigma_1,\ldots,\sigma_{|\sigma|}).
\end{eqnarray}
By \cite[Lemma 2]{Zhu:08}, for every finite maximal antichain $\Gamma\subset\Omega^*$, we have
\begin{eqnarray}\label{g6}
\mu&=&\sum_{\sigma\in\Gamma}\big(h^{(1)}(\sigma)s_\sigma^{-r}\nu\circ f_\sigma^{-1}+h^{(2)}(\sigma)s_\sigma^{-r}\mu\circ f_\sigma^{-1}\big)\nonumber\\
&=&\sum_{\sigma\in\Gamma}\big(\mu^{(1)}(E_\sigma)\nu\circ f_\sigma^{-1}+\mu^{(2)}(E_\sigma)\mu\circ f_\sigma^{-1}\big).
\end{eqnarray}
Assuming the SSC for the iterated function system $(f_i)_{i=1}^N$, one easily gets
\begin{equation*}
\mu(E_\sigma)=\mu^{(1)}(E_\sigma)+\mu^{(2)}(E_\sigma),\;\;\sigma\in\Omega^*.
\end{equation*}
By applying the results and methods of Graf \cite[Lemma 3.3]{Gr:95}, we obtain this equality under the weaker OSC. That is,
\begin{lemma}\label{s5}
Assume that $(f_i)_{i=1}^N$ satisfies the OSC. Then $\mu(E_\sigma\cap E_\omega)=0$ for every pair $\sigma,\omega$ of incomparable words. As a consequence, we have
\begin{eqnarray}\label{z1}
\mu(E_\sigma)=\mu^{(1)}(E_\sigma)+\mu^{(2)}(E_\sigma)=\sum_{h=0}^{k-1}p_0p_{\sigma|_h}t_{\sigma^{(l)}_{-h}}+p_\sigma.
\end{eqnarray}
\end{lemma}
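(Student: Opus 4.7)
The plan splits into two stages. First I show $\mu(E_\sigma\cap E_\omega)=0$ for incomparable $\sigma,\omega$; then formula (\ref{z1}) follows quickly from (\ref{g6}) taken with $\Gamma=\Omega_{|\sigma|}$, together with (\ref{z8}) and $\mu^{(2)}(E_\sigma)=p_\sigma$. The single auxiliary tool is a pullback remark for $\nu$: for every Borel set $B$, $\nu(B)=0$ forces $\nu(f_\tau^{-1}(B))=0$. This is immediate from $\nu=\sum_{\rho\in\Omega_{|\tau|}}t_\rho\nu\circ f_\rho^{-1}$, whose $\rho=\tau$ summand bounds $t_\tau\nu(f_\tau^{-1}(B))$ from above by $\nu(B)$, and $t_\tau>0$.

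For the overlap claim I fix incomparable $\sigma,\omega$ and an integer $K>\max(|\sigma|,|\omega|)$, and evaluate $\mu(E_\sigma\cap E_\omega)$ via (\ref{g6}) on the maximal antichain $\Gamma=\Omega_K$. For every $\tau\in\Omega_K$ the words $\sigma,\omega$ cannot both be prefixes of $\tau$ (otherwise they would be comparable with each other), so $\tau$ is incomparable with at least one of them, say $\sigma$. Graf's OSC-result \cite[Lemma 3.3]{Gr:95} applied to the self-similar $\nu$ gives $\nu(E_\sigma\cap E_\tau)=0$, and together with the pullback remark yields $\nu\circ f_\tau^{-1}(E_\sigma)=0$; this kills the $\nu$-summands in (\ref{g6}). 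Bounding the surviving $\mu$-summands trivially by $p_\tau$ leaves
\[
\mu(E_\sigma\cap E_\omega)\le\sum_{\tau\in\Omega_K}p_\tau=(1-p_0)^K,
\]
which vanishes as $K\to\infty$ because $p_0>0$.

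The second stage repackages the first. With $\Gamma=\Omega_{|\sigma|}$ in (\ref{g6}) the summand $\tau=\sigma$ contributes $\mu^{(1)}(E_\sigma)+\mu^{(2)}(E_\sigma)$ since $f_\sigma^{-1}(E_\sigma)=E$ supports both $\nu$ and $\mu$; the $\nu$-summands at $\tau\neq\sigma$ vanish as before, and the leftover $\mu$-summand $p_\tau\mu(f_\tau^{-1}(E_\sigma\cap E_\tau))$ is dispatched by rerunning stage one on the set $f_\tau^{-1}(E_\sigma\cap E_\tau)$: iterating (\ref{g6}) to depth $K$, using $f_\gamma^{-1}\circ f_\tau^{-1}=f_{\tau\ast\gamma}^{-1}$ together with the incomparability of $\sigma$ and $\tau\ast\gamma$ (verified exactly as for $\tau$), again annihilates every $\nu$-contribution while the $\mu$-contributions sum to at most $(1-p_0)^K\to 0$. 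Combining $\mu(E_\sigma)=\mu^{(1)}(E_\sigma)+\mu^{(2)}(E_\sigma)$ with (\ref{z8}) and $\mu^{(2)}(E_\sigma)=p_\sigma$ gives (\ref{z1}). The only genuine obstacle is that $\mu$ itself has no self-similarity, so a one-shot pullback does not kill the residual $\mu$-piece; the geometric decay $(1-p_0)^K\to 0$ extracted from iterating (\ref{g6}) is precisely the substitute.
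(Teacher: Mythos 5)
Your proof is correct, and it takes a genuinely different route from the paper's. The paper invokes the compact set $J$ of Schief and Graf (satisfying (A1)--(A4)), first establishes $\mu(\operatorname{int}(J))=1$ from the decomposition (\ref{g6}), and then uses the disjointness $f_\omega^{-1}(J_\sigma)\cap\operatorname{int}(J)=\emptyset$ to conclude $\mu(J_\sigma\cap J_\omega)=0$. You bypass $J$ entirely: you apply (\ref{g6}) directly to the set $E_\sigma\cap E_\omega$ at the level $\Omega_K$, kill every $\nu$-summand via the standard non-overlap property of the self-similar measure $\nu$ under the OSC (which is where Graf's Lemma 3.3 enters for you, as it does for the paper), bound the surviving $\mu$-summands trivially by $\sum_{\tau\in\Omega_K}p_\tau=(1-p_0)^K$, and let $K\to\infty$. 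This is arguably more elementary -- it never leaves the cylinders $E_\sigma$ and replaces the topological input about $J$ with a geometric-decay estimate, at the price of leaning explicitly on $p_0>0$ (which is part of the hypotheses anyway).

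One small inaccuracy in your closing comment: $\mu$ does admit a ``one-shot'' pullback. From $\mu=p_0\nu+\sum_{i=1}^Np_i\,\mu\circ f_i^{-1}$ and $p_i>0$ one reads off $\mu(B)=0\Rightarrow\mu(f_i^{-1}(B))=0$, so by induction $\mu(B)=0\Rightarrow\mu(f_\tau^{-1}(B))=0$ for every $\tau\in\Omega^*$. Thus in your second stage, once stage one has yielded $\mu(E_\sigma\cap E_\tau)=0$ for each $\tau\in\Omega_{|\sigma|}\setminus\{\sigma\}$, the residual term $\mu\bigl(f_\tau^{-1}(E_\sigma\cap E_\tau)\bigr)$ is zero at once; the re-iteration of (\ref{g6}) to depth $K$ is harmless but unnecessary. (What is true, and what you presumably meant, is that such a pullback cannot be used to \emph{establish} $\mu(E_\sigma\cap E_\omega)=0$ in the first place without becoming circular -- that is why the iteration is indispensable in stage one.)
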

\begin{proof}
Let $J$ be the same as in \cite[Theorem 3.2]{Gr:95}(see \cite{Schief:94}), namely, $J$ is a nonempty compact set satisfying (A1) $J={\rm cl}({\rm int}(J))$; (A2) ${\rm int}(J)\cap E\neq\emptyset$; (A3) $f_i(J)\subset J$ for all $1\leq j\leq N$; (A4) $f_i({\rm int}(J))\cap f_j({\rm int}(J))=\emptyset$. Here, ${\rm int}(A)$ and ${\rm cl}(A)$ respectively denote the interior and the closure in $\mathbb{R}^q$ of a set $A$. By induction, one can see that $f_\omega({\rm int}(J))\subset{\rm int}(J)$, and for every pair $\sigma, \omega$ of incomparable words, we have $f_\omega({\rm int}(J))\cap f_\sigma({\rm int}(J))=\emptyset$. Hence, using (\ref{g6}), we deduce
\begin{eqnarray}\label{g8}
\mu({\rm int}(J))&\geq&\sum_{\omega\in\Omega_n}\mu(f_\omega({\rm int}(J)))\nonumber\\
&=&\sum_{\omega\in\Omega_n}\sum_{\sigma\in\Omega_n}\big(\mu^{(1)}(E_\sigma)\nu\circ f_\sigma^{-1}+\mu^{(2)}(E_\sigma)\mu\circ f_\sigma^{-1}\big)(f_\omega({\rm int}(J)))\nonumber\\&\geq&\sum_{\omega\in\Omega_n}\big(\mu^{(1)}(E_\omega)\nu\circ f_\omega^{-1}+\mu^{(2)}(E_\omega)\mu\circ f_\omega^{-1}\big)(f_\omega({\rm int}(J)))\nonumber\\&=&\sum_{\omega\in\Omega_n}\mu^{(1)}(E_\sigma)\nu({\rm int}(J))+\sum_{\omega\in\Omega_n}\mu^{(2)}(E_\sigma)\mu({\rm int}(J)).
\end{eqnarray}
Since $\nu$ is a self-similar measure, by \cite[Lemma 3.3]{Gr:95}, $\nu({\rm int}(J))=1$. Note that
\begin{eqnarray*}
&&\sum_{\omega\in\Omega_n}(\mu^{(1)}(E_\sigma)+\mu^{(2)}(E_\sigma))=\sum_{\omega\in\Omega_n}
\big(\sum_{h=0}^{n-1}p_0p_{\sigma|_h}t_{\sigma^{(l)}_{-h}}+p_\omega\big)\\&&=p_0\sum_{h=0}^{n-1}\sum_{\omega\in\Omega_n}p_{\sigma|_h}t_{\sigma^{(l)}_{-h}}+
\sum_{\omega\in\Omega_n}p_\omega\\&&=p_0\sum_{h=0}^{n-1}\big(\sum_{i=1}^Np_i\big)^h\big(\sum_{i=1}^Nt_i\big)^{k-h}+\big(\sum_{i=1}^Np_i\big)^n\\&&=
p_0\sum_{h=0}^{n-1}(1-p_0)^h+(1-p_0)^n=1.
\end{eqnarray*}
Using this, (\ref{g8}) and the fact that $\nu({\rm int}(J))=1$, we get,
$\mu({\rm int}(J))\geq 1$.
It follows that $\mu({\rm int}(J))=\mu(J)=1$ and $\mu(\mathbb{R}^q\setminus{\rm int}(J))=0$. By \cite[p. 227]{Gr:95},
\begin{equation*}
f_\omega^{-1}(J_\sigma)\cap{\rm int}(J)=\emptyset,\;\;{\rm if}\;\;\sigma\nprec\omega\;{\rm and}\;\omega\nprec\sigma.
\end{equation*}
 Hence, $\mu(f_\omega^{-1}(J_\sigma))=0$ for all incomparable pairs $\sigma,\omega$. For such a pair $\sigma,\omega$, let $\Gamma$ be an arbitrary finite maximal antichain containing $\sigma,\tau$. Note that $J_\sigma\cap J_\omega\subset J_\sigma,J_\omega$ Thus, for $\tau\in\Gamma$, we have
\begin{eqnarray}\label{g9}
\mu\circ f_\tau^{-1}(J_\sigma\cap J_\omega)
\left\{ \begin{array}{ll}
\leq\mu\circ f_\tau^{-1}(J_\sigma)=0\;\;\;\;\;\;\;\tau\neq\sigma,\omega\\
\leq\mu\circ f_\tau^{-1}(J_\omega)=0,\;\;\;\;\; \tau=\sigma\\
\leq\mu\circ f_\tau^{-1}(J_\sigma)=0\;\;\;\;\;\;\; \tau=\omega
\end{array}\right..
\end{eqnarray}
Again, note that $\nu$ is a self-similar measure. By \cite[Lemma 3.3]{Gr:95}, for all $\tau\in\Gamma$, we have, $\nu\circ f_\tau^{-1}(J_\sigma\cap J_\omega)=0$ (for the same reason as (\ref{g9})).
This, together with (\ref{g6}) and (\ref{g9}), yields
\begin{eqnarray*}
\mu(J_\sigma\cap J_\omega)=\sum_{\tau\in\Gamma}\big(\mu^{(1)}(E_\tau)\nu\circ f_\tau^{-1}+\mu^{(2)}(E_\tau)\mu\circ f_\tau^{-1}\big)(J_\sigma\cap J_\omega)=0.
\end{eqnarray*}
Since $E_\sigma\cap E_\tau\subset J_\sigma\cap J_\omega$, we conclude that $\mu(E_\sigma\cap E_\omega)=0$. Finally, by (\ref{g6}),
\begin{eqnarray*}
&&\mu(E_\sigma)=\sum_{\tau\in\Omega_{|\sigma|}}(\mu^{(1)}(E_\tau)\nu\circ f_\tau^{-1}+\mu^{(2)}(E_\tau)\mu\circ f_\tau^{-1})(E_\sigma)\\&&=\mu^{(1)}(E_\sigma)\nu\circ f_\sigma^{-1}(E_\sigma)
+\mu^{(2)}(E_\sigma)\mu\circ f_\sigma^{-1}(E_\sigma)
=\sum_{h=0}^{k-1}p_0p_{\sigma|_h}t_{\sigma^{(l)}_{-h}}+p_\sigma.
\end{eqnarray*}
This completes the proof of the lemma.
\end{proof}

Let $\underline{\eta}_r:=\min_{1\leq i\leq N}\min\big\{(p_0t_i+p_i),t_i\big\}s_i^r$. Then, we have
\begin{eqnarray*}
h(\sigma)\geq \underline{\eta}_r h(\sigma^{(r)}_{-1})\;\;{\rm for\;all}\;\;\sigma\in\Omega^*.
\end{eqnarray*}
Inspired by \cite[(14.5)]{GL:01}, we define the following finite maximal antichains:
\begin{equation}\label{gamman}
\Lambda_{k,r}:=\big\{\sigma\in\Omega^*:\;h(\sigma^{(r)}_{-1})\geq
k^{-1}\underline{\eta}_r>h(\sigma)\big\},\;\;k\geq 1.
\end{equation}
We denote by $N_{k,r}$ the cardinality of $\Lambda_{k,r}$. Assume the SSC for $(f_i)_{i=1}^N$, we have essentially proved the following estimates \cite[Lemmas 7,10]{Zhu:08}:
\begin{eqnarray}\label{knownestimate}
D\sum_{\sigma\in\Lambda_{k,r}}h(\sigma)\leq e^r_{ N_{k,r},r}(\mu)\leq\sum_{\sigma\in\Lambda_{k,r}}h(\sigma).
\end{eqnarray}
where $D>0$ is a constant independent of $k$. Next, we show that these estimates are valid if we assume the OSC for $(f_i)_{i=1}^N$. That is,
\begin{lemma}\label{errorestimate}
Assume that $(f_i)_{i=1}^N$ satisfies the OSC. Then (\ref{knownestimate}) holds.
\end{lemma}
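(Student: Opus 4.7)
This part essentially repeats the argument of \cite{Zhu:08} and does not use separation of the cylinders $E_\sigma$. Apply the decomposition (\ref{g6}) with $\Gamma=\Lambda_{k,r}$, pick any $x_\sigma\in E_\sigma$ for each $\sigma\in\Lambda_{k,r}$, and take $\alpha=\{x_\sigma:\sigma\in\Lambda_{k,r}\}$, so that $\mathrm{card}(\alpha)\le N_{k,r}$. Since $\mathrm{diam}(E_\sigma)=s_\sigma$ and $\mu(E_\sigma)=h(\sigma)s_\sigma^{-r}$ by Lemma~\ref{s5}, I would get
\[
e_{N_{k,r},r}^r(\mu)\le\int d(x,\alpha)^r\,d\mu\le\sum_{\sigma\in\Lambda_{k,r}}s_\sigma^r\mu(E_\sigma)=\sum_{\sigma\in\Lambda_{k,r}}h(\sigma).
\]

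\textbf{Lower bound: producing separated mass-bearing balls.} The SSC was used in \cite[Lemma~10]{Zhu:08} to produce, inside each $E_\sigma$, a ball of $\mu$-mass comparable to $\mu(E_\sigma)$ that is separated from every other cylinder, and under the OSC this has to come from the Schief set $J$ of Lemma~\ref{s5}. I would fix $a\in\mathrm{int}(J)\cap E$ and $\rho_0>0$ with $\overline{B(a,\rho_0)}\subset\mathrm{int}(J)$. Since $\nu(\mathrm{int}(J))=\mu(\mathrm{int}(J))=1$ (the latter proved inside Lemma~\ref{s5}), the usual self-similar-measure argument yields a constant $c_0>0$ with $\nu(B(a,\rho_0/3))\ge c_0$ and $\mu(B(a,\rho_0/3))\ge c_0$. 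Combining this with (\ref{g6}) gives, for every $\sigma\in\Omega^*$,
\[
\mu\bigl(f_\sigma(B(a,\rho_0/3))\bigr)\ge c_0\bigl(\mu^{(1)}(E_\sigma)+\mu^{(2)}(E_\sigma)\bigr)=c_0\,h(\sigma)s_\sigma^{-r},
\]
and the closed balls $f_\sigma(\overline{B(a,\rho_0)})$ are pairwise disjoint along any antichain, because they are contained in the pairwise disjoint open sets $f_\sigma(\mathrm{int}(J))$.

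\textbf{Lower bound: codebook counting.} With this fattened ball in every cylinder the remaining argument is essentially that of \cite[Lemma~10]{Zhu:08}. Refine $\Lambda_{k,r}$ to $\widetilde\Lambda:=\{\sigma\ast i:\sigma\in\Lambda_{k,r},\,1\le i\le N\}$, an antichain of size $N\cdot N_{k,r}$. For any $\alpha\in\mathcal{D}_{N_{k,r}}$ call $\tau\in\widetilde\Lambda$ \emph{bad} when $\alpha\cap f_\tau(\overline{B(a,\rho_0)})=\emptyset$; the pairwise disjointness of these closed balls forces at least $(N-1)N_{k,r}$ bad $\tau$. For each bad $\tau$ and each $x\in f_\tau(B(a,\rho_0/3))$ one has $d(x,\alpha)\ge s_\tau\rho_0/3$, so using the mass estimate above, the inequality $h(\sigma\ast i)\ge\underline{\eta}_r h(\sigma)$ coming directly from (\ref{hsigma}) and the definition of $\underline{\eta}_r$, and the fact that all $h(\sigma)$ with $\sigma\in\Lambda_{k,r}$ are comparable by (\ref{gamman}), I would obtain
\[
\int d(x,\alpha)^r\,d\mu\ge\Bigl(\tfrac{\rho_0}{3}\Bigr)^r c_0\sum_{\tau\text{ bad}}h(\tau)\ge D\sum_{\sigma\in\Lambda_{k,r}}h(\sigma)
\]
for a constant $D>0$ independent of $k$ and $\alpha$. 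The principal obstacle is the separation step: under the OSC alone, the cylinders $E_\sigma$ may overlap, so both the positive $\mu$-mass and the mutual disjointness of the fat balls $f_\sigma(B(a,\rho_0/3))$ must be harvested from Lemma~\ref{s5} and the decomposition (\ref{g6}), rather than taken for granted from the SSC as in \cite{Zhu:08}; once this is done, everything else is cosmetic.
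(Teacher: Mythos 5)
Your argument is correct and follows essentially the same strategy as the paper: use the strong open set condition (via Schief) to place a fixed compact template of positive $\mu$-mass inside the separating open set, push it forward by $f_\sigma$ for $\sigma$ in the antichain, and exploit the pairwise disjointness of these push-forwards to bound the quantization error from below. The only differences are cosmetic in execution: the paper fixes a subcylinder $E_\tau$ contained in the SOSC open set $U$, appends $\tau$ to produce pairwise disjoint $E_{\sigma\ast\tau}$, and then passes to the conditional measure $\mu(\cdot\mid\bigcup_\sigma E_{\sigma\ast\tau})$ and cites \cite[Lemma~10]{Zhu:08} for the codebook count, whereas you fix a ball $\overline{B(a,\rho_0)}\subset\mathrm{int}(J)$ in the Schief compact set from Lemma~\ref{s5}, read off the uniform mass bound $\mu(f_\sigma(B(a,\rho_0/3)))\ge c_0\,h(\sigma)s_\sigma^{-r}$ directly from (\ref{g6}), and carry out the refined-antichain codebook count yourself; both routes hinge on the same idea and yield a constant $D$ of the same form.
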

\begin{proof}
For each $\sigma\in\Lambda_{k,r}$, let $a_\sigma$ be an arbitrary point of $E_\sigma$. Then
\begin{eqnarray*}
e^r_{ N_{k,r},r}(\mu)\leq\sum_{\sigma\in\Lambda_{k,r}}\int_{E_\sigma}d(x,a_\sigma)^rd\mu(x)\leq\sum_{\sigma\in\Lambda_{k,r}}\mu(E_\sigma)c_\sigma^r
=\sum_{\sigma\in\Lambda_{k,r}}h(\sigma).
\end{eqnarray*}
 So, it suffices to show the first inequality. By \cite{Schief:94}, there exists a non-empty open set $U$ with $U\cap E\neq\emptyset$ such that
$f_i(U)\subset U$ for all $i=1,2,\ldots,N$, and $f_i(U)\cap f_j(U)=\emptyset$ for $i\neq j$. By induction, for every pair of incomparable words $\sigma,\tau\in\Omega^*$, we have, $f_\sigma(U),f_\tau(U)\subset U$ and $f_\sigma(U)\cap f_\tau(U)=\emptyset$.
 Thus, we may fix a word $\tau\in\Omega^*$ such that $E_\tau\subset U$ and $E_{\sigma\ast\tau},\sigma\in\Lambda_{k,r}$, are pairwise disjoint. Since $E_\tau$ is a compact set, we have that $\delta:=d(E_\tau, U^c)>0$. Note that $f_\sigma(E_{\sigma\ast\tau})\subset f_\sigma(U)$ for every $\sigma\in\Omega^*$. Hence, for two distinct words $\sigma,\omega\in\Lambda_{k,r}$, we have
\begin{eqnarray*}
d(E_{\sigma\ast\tau},E_{\omega\ast\tau})&\geq&\max\{d(E_{\sigma\ast\tau},f_\sigma(U)^c),d(E_{\omega\ast\tau},f_\omega(U)^c)\}\\
&=&\max\{d(E_{\sigma\ast\tau},f_\sigma(U^c)),d(E_{\omega\ast\tau},f_\omega(U^c))\}\\&=&\delta\max\{s_\sigma,s_\omega\}
\end{eqnarray*}
This allows us to apply the arguments in \cite[Lemma 10]{Zhu:08} to the measure $\lambda$:
\begin{eqnarray*}
\lambda:=\mu\big(\cdot|\bigcup_{\sigma\in\Lambda_{k,r}}E_{\sigma\ast\tau}\big):=
\frac{\mu\big(\cdot\cap\big(\bigcup_{\sigma\in\Lambda_{k,r}}E_{\sigma\ast\tau}\big)\big)}{\mu\big(\bigcup_{\sigma\in\Lambda_{k,r}}E_{\sigma\ast\tau}\big)},
\end{eqnarray*}
 and one may find a constant $A_1$, which is independent of $k$, such that
\begin{eqnarray}\label{z10}
e^r_{ N_{k,r},r}(\lambda)\geq A_1\sum_{\sigma\in\Lambda_{k,r}}h(\sigma\ast\tau)=A_1\underline{\eta}_r^{|\tau|}\sum_{\sigma\in\Lambda_{k,r}}h(\sigma).
\end{eqnarray}
Since $E_{\sigma\ast\tau},\sigma\in\Lambda_{k,r}$, are pairwise disjoint, we have
 \begin{eqnarray*}
 \mu\big(\bigcup_{\sigma\in\Lambda_{k,r}}E_{\sigma\ast\tau}\big)&=&\sum_{\sigma\in\Lambda_{k,r}}h(\sigma\ast\tau)s_{\sigma\ast\tau}^{-r}\geq
 \underline{\eta}_r^{|\tau|}s_\tau^{-r}\sum_{\sigma\in\Lambda_{k,r}}h(\sigma)s_\sigma^{-r}\\&\geq&
 \underline{\eta}_r^{|\tau|}s_\tau^{-r}\sum_{\sigma\in\Lambda_{k,r}}\mu(E_\sigma)\geq s_\tau^{-r}\underline{\eta}_r^{|\tau|}.
 \end{eqnarray*}
Using this and (\ref{z10}), we further deduce
\begin{eqnarray*}
e^r_{ N_{k,r},r}(\mu)\geq \mu\big(\bigcup_{\sigma\in\Lambda_{k,r}}E_{\sigma\ast\tau}\big)e^r_{ N_{k,r},r}(\lambda)\geq A_1\underline{\eta}_r^{|\tau|} s_\tau^{-r}\underline{\eta}_r^{|\tau|}\sum_{\sigma\in\Lambda_{k,r}}h(\sigma).
\end{eqnarray*}
Hence, the proof of the lemma is complete by setting $D:= A_1\underline{\eta}_r^{2|\tau|}$.
\end{proof}

\section{Proof of Theorem \ref{mthm1}}
To give the proof for Theorem \ref{mthm1} (2), we need to establish several lemmas.
For every $r>0$ and $s>0$, we write
\begin{eqnarray}\label{abs}
a(s):=\sum_{i=1}^N(t_is_i^r)^{\frac{s}{s+r}},\;\;b(s):=\sum_{i=1}^N(p_is_i^r)^{\frac{s}{s+r}}.
\end{eqnarray}
\begin{lemma}\label{s0}
For every $k\geq 1$, we have
\begin{eqnarray*}
p_0^{\frac{s}{s+r}}\max\{a(s)^k,b(s)^k\}\leq\sum_{\sigma\in\Omega_k}h(\sigma)^{\frac{s}{s+r}}\leq p_0^{\frac{s}{s+r}}\sum_{h=0}^{k-1}a(s)^hb(s)^{k-h}+b(s)^k.
\end{eqnarray*}
\end{lemma}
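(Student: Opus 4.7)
The plan is to expand $h(\sigma)$ explicitly as a sum of $k+1$ positive product terms, indexed by a ``switch-position'' $h$ (the number of initial coordinates that contribute $p_m s_m^r$ rather than $t_m s_m^r$), and then to invoke, for $q:=s/(s+r)\in(0,1)$, the sub-additivity $(x+y)^q\leq x^q+y^q$ for the upper bound together with the trivial monotonicity $h(\sigma)\geq h^{(i)}(\sigma)$ for the lower bound. Writing $\alpha_i:=p_is_i^r$ and $\beta_i:=t_is_i^r$, so that $b(s)=\sum_i\alpha_i^q$ and $a(s)=\sum_i\beta_i^q$, I would use $h^{(j)}(\sigma)=\mu^{(j)}(E_\sigma)\,s_\sigma^r$ together with (\ref{z8}) to read off
\begin{align*}
h^{(2)}(\sigma) &= \prod_{j=1}^{k}\alpha_{\sigma_j}, \\
h^{(1)}(\sigma) &= p_0 \sum_{h=0}^{k-1} \Bigl( \prod_{j=1}^{h} \alpha_{\sigma_j} \Bigr) \Bigl( \prod_{j=h+1}^{k} \beta_{\sigma_j} \Bigr),
\end{align*}
with the convention that an empty product equals $1$, for every $\sigma=(\sigma_1,\ldots,\sigma_k)\in\Omega_k$.

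For the upper bound, I would apply $(x+y)^q\leq x^q+y^q$ iteratively to this decomposition of $h(\sigma)$ as a sum of $k+1$ positive summands, obtaining
\[
h(\sigma)^q \leq \prod_{j=1}^{k} \alpha_{\sigma_j}^q + p_0^q \sum_{h=0}^{k-1} \Bigl( \prod_{j=1}^{h} \alpha_{\sigma_j}^q \Bigr) \Bigl( \prod_{j=h+1}^{k} \beta_{\sigma_j}^q \Bigr),
\]
and then sum over $\sigma \in \Omega_k$. Since $\sum_{\sigma \in \Omega_k} \prod_{j=1}^{k} f_j(\sigma_j) = \prod_{j=1}^{k} \sum_{i=1}^{N} f_j(i)$ for any functions $f_j$, the right-hand side collapses to $b(s)^k + p_0^q \sum_{h=0}^{k-1} b(s)^h a(s)^{k-h}$, which after the evident reindexing matches the claimed upper bound.

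For the lower bound, I would discard terms in two different ways. Since $h(\sigma) \geq h^{(2)}(\sigma) = \prod_j \alpha_{\sigma_j}$, direct summation gives $\sum_\sigma h(\sigma)^q \geq b(s)^k$. On the other hand, retaining only the $h=0$ summand in the expression for $h^{(1)}(\sigma)$ yields $h(\sigma) \geq h^{(1)}(\sigma) \geq p_0 \prod_{j=1}^{k} \beta_{\sigma_j}$, whence $\sum_\sigma h(\sigma)^q \geq p_0^q a(s)^k$. Combining and using $p_0 \leq 1$, I obtain $\sum_\sigma h(\sigma)^q \geq \max\{p_0^q a(s)^k, b(s)^k\} \geq p_0^q \max\{a(s)^k, b(s)^k\}$.

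There is no substantial obstacle. The only bookkeeping care required is to stop the $h$-sum in the formula for $h^{(1)}$ at $k-1$, so that the ``all-$\alpha$'' product $\prod_j \alpha_{\sigma_j}$ is counted exactly once through the separate term $h^{(2)}(\sigma)$ (which is also why the factor $p_0^q$ attaches to only one of the two pieces), and to match the two extreme index choices ($h=0$ on the $\beta$-side, all of $h^{(2)}$ on the $\alpha$-side) with the two alternatives in the max on the lower-bound side.
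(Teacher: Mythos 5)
Your proof is correct and follows essentially the same route as the paper: expand $\mu(E_\sigma)$ (equivalently $h(\sigma)$) via (\ref{z1})/(\ref{z8}) into the $k+1$ positive monomials indexed by the ``switch position'' $h$, apply the concavity inequality $(x+y)^q\le x^q+y^q$ and multiplicativity of the sum over $\Omega_k$ for the upper bound, and retain the single $h=0$ summand of $h^{(1)}$ respectively the $h^{(2)}$ term for the two branches of the lower bound. One small point of precision: the quantity you obtain, $b(s)^k+p_0^{s/(s+r)}\sum_{h=0}^{k-1}b(s)^h a(s)^{k-h}$, is \emph{not} a reindexing of the lemma's displayed $p_0^{s/(s+r)}\sum_{h=0}^{k-1}a(s)^h b(s)^{k-h}+b(s)^k$; the two sums run over $\{a^l b^{k-l}:1\le l\le k\}$ and $\{a^l b^{k-l}:0\le l\le k-1\}$ respectively, so they differ in exactly one term ($p_0^{s/(s+r)}a(s)^k$ versus $p_0^{s/(s+r)}b(s)^k$). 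Your version is the one that actually follows from (\ref{z1}) and is what the paper's own proof derives, so the discrepancy is an indexing slip in the lemma's statement rather than a flaw in your argument, but the phrase ``after the evident reindexing matches the claimed upper bound'' should be amended accordingly.
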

\begin{proof}
For every $\sigma=(\sigma_1,\ldots,\sigma_k)\in\Omega_k$, by (\ref{z1}), we have
\begin{eqnarray}\label{z2}
&&\mu(E_\sigma)^{\frac{s}{s+r}}=\big(\sum_{h=0}^{k-1}p_0p_{\sigma|_h}t_{\sigma^{(l)}_{-h}}+p_\sigma\big)^{\frac{s}{s+r}}\nonumber
\\&&\;\;\;\;\;\;\;\;\left\{\begin{array}{ll}\leq\sum_{h=0}^{k-1}(p_0p_{\sigma|_h}t_{\sigma^{(l)}_{-h}})^{\frac{s}{s+r}}+p_\sigma^{\frac{s}{s+r}}\\
\geq\max\{(p_0t_\sigma)^{\frac{s}{s+r}},p_\sigma^{\frac{s}{s+r}}\}
\end{array}\right..
\end{eqnarray}
Note that $h(\sigma)=\mu(E_\sigma)s_\sigma^r$ for $\sigma\in\Omega^*$. It follows that
\begin{eqnarray*}
&&\sum_{\sigma\in\Omega_k}h(\sigma)^{\frac{s}{s+r}}=\sum_{\sigma\in\Omega_k}(\mu(E_\sigma)s_\sigma^r)^{\frac{s}{s+r}}\\&&
\leq\sum_{\sigma\in\Omega_k}\sum_{h=0}^{k-1}(p_0p_{\sigma|_h}t_{\sigma^{(l)}_{-h}}s_\sigma^r)^{\frac{s}{s+r}}+\sum_{\sigma\in\Omega_k}(p_\sigma s_\sigma^r)^{\frac{s}{s+r}}\\
&&=\sum_{h=0}^{k-1}\sum_{\sigma\in\Omega_k}(p_0p_{\sigma|_h}t_{\sigma^{(l)}_{-h}}s_\sigma^r)^{\frac{s}{s+r}}+\sum_{\sigma\in\Omega_k}(p_\sigma s_\sigma^r)^{\frac{s}{s+r}}
\\&&=p_0^{\frac{s}{s+r}}\sum_{h=0}^{k-1}a(s)^{k-h}b(s)^h+b(s)^k.
\end{eqnarray*}
Using the second inequality in (\ref{z2}), one easily gets the remaining inequality.
\end{proof}

As Lemma \ref{errorestimate} shows, the quantization errors are characterized
in terms of the finite maximal antichains as defined in (\ref{gamman}); however, these antichains typically consist of words of different length. In the following, we need to estimate the sums over these antichains by means of words of same length. Similar situations often occur in the study of fractal geometry, one may see \cite{HL:96} for example. For $k\geq 1$, we define
\begin{eqnarray*}
l_{1k}:=\min_{\sigma\in\Lambda_{k,r}}|\sigma|,\;\;l_{2k}:=\max_{\sigma\in\Lambda_{k,r}}|\sigma|.
\end{eqnarray*}

\begin{lemma}\label{s1}
For $a(s), b(s)$ as defined in (\ref{abs}), we have
\begin{eqnarray}\label{z4'}
\left\{\begin{array}{ll}a(s)^{l_{1k}}\leq\sum_{\sigma\in\Lambda_{k,r}}(t_\sigma s_\sigma^r)^{\frac{s}{s+r}}\leq a(s)^{l_{2k}},\;{\rm if}\;\;s\leq\xi_{1,r} \\
a(s)^{l_{2k}}\leq\sum_{\sigma\in\Lambda_{k,r}}(t_\sigma s_\sigma^r)^{\frac{s}{s+r}}\leq a(s)^{l_{1k}},\;{\rm if}\;\;s\geq \xi_{1,r}\end{array}\right..
\end{eqnarray}
With $b(s), p_\sigma,\xi_{2,r}$ in place of $a(s), t_\sigma,\xi_{1,r}$, (\ref{z4'}) remains true.
\end{lemma}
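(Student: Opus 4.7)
The plan is to compare the sum $\sum_{\sigma \in \Lambda_{k,r}} (t_\sigma s_\sigma^r)^{s/(s+r)}$ with the analogous sums over the regular antichains $\Omega_{l_{1k}}$ and $\Omega_{l_{2k}}$, which evaluate to $a(s)^{l_{1k}}$ and $a(s)^{l_{2k}}$ respectively by a multinomial expansion. The threshold behaviour is governed by the sign of $a(s)-1$: because $t_i s_i^r \in (0,1)$ and the exponent $s/(s+r)$ is strictly increasing in $s$, the map $s \mapsto a(s)$ is strictly decreasing, while by definition $a(\xi_{1,r}) = 1$. Hence $a(s) \geq 1$ when $s \leq \xi_{1,r}$ and $a(s) \leq 1$ when $s \geq \xi_{1,r}$, which is precisely the dichotomy appearing in (\ref{z4'}).

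The central tool is an elementary \emph{expansion step}. Given a finite maximal antichain $\Gamma$ and $\sigma \in \Gamma$, the set $\Gamma' := (\Gamma \setminus \{\sigma\}) \cup \{\sigma \ast 1, \ldots, \sigma \ast N\}$ is again a finite maximal antichain, and the multiplicativity $t_{\sigma \ast i} s_{\sigma \ast i}^r = (t_\sigma s_\sigma^r)(t_i s_i^r)$ gives
\begin{equation*}
\sum_{\omega \in \Gamma'} (t_\omega s_\omega^r)^{\frac{s}{s+r}} - \sum_{\omega \in \Gamma} (t_\omega s_\omega^r)^{\frac{s}{s+r}} = (t_\sigma s_\sigma^r)^{\frac{s}{s+r}}\bigl(a(s)-1\bigr).
\end{equation*}
Consequently, each expansion weakly increases the sum when $a(s) \geq 1$ and weakly decreases it when $a(s) \leq 1$.

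To close the argument I would check that $\Lambda_{k,r}$ is reachable from $\Omega_{l_{1k}}$ by a finite sequence of such expansions, and that $\Omega_{l_{2k}}$ is reachable from $\Lambda_{k,r}$ in the same fashion. Any $\tau \in \Omega_{l_{1k}}$ can be extended to an infinite word which, by maximality of $\Lambda_{k,r}$, has some predecessor $\sigma \in \Lambda_{k,r}$; the two words $\tau$ and $\sigma$ are then comparable, and $|\sigma| \geq l_{1k} = |\tau|$ forces $\tau = \sigma$ or $\tau \prec \sigma$. A symmetric argument using $|\sigma| \leq l_{2k}$ shows that every $\tau \in \Omega_{l_{2k}}$ equals or has as a predecessor some element of $\Lambda_{k,r}$, so $\Omega_{l_{2k}}$ can be produced from $\Lambda_{k,r}$ by further expansions. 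Iterating the expansion step then sandwiches the target sum between $a(s)^{l_{1k}}$ and $a(s)^{l_{2k}}$, with the ordering determined by the sign of $a(s)-1$; this is precisely (\ref{z4'}). The statement for $b(s)$, $p_\sigma$, $\xi_{2,r}$ is proved by the identical argument with $t_i$ replaced by $p_i$ throughout. I do not anticipate any real obstacle: the multiplicativity makes the expansion identity automatic, and the only care needed is to track the direction of the inequality across the threshold $s = \xi_{1,r}$.
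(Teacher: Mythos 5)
Your proof is correct, and it organizes the argument differently from the paper. Both rest on the same multiplicative identity $\sum_{i=1}^{N}(t_{\sigma\ast i}s_{\sigma\ast i}^{r})^{s/(s+r)}=a(s)\,(t_{\sigma}s_{\sigma}^{r})^{s/(s+r)}$, but the paper packages it via an auxiliary normalization: it sets $I_{h}(s)=a(s)^{h}$, defines $\xi(\sigma)=I_{|\sigma|}(s)^{-1}(t_{\sigma}s_{\sigma}^{r})^{s/(s+r)}$, checks that $\sum_{\sigma\in\Gamma}\xi(\sigma)=1$ for every finite maximal antichain $\Gamma$, and then reads the target sum as the weighted average $\sum_{\sigma\in\Lambda_{k,r}}\xi(\sigma)I_{|\sigma|}(s)$, which is squeezed between $\min_{l_{1k}\le h\le l_{2k}}a(s)^{h}$ and $\max_{l_{1k}\le h\le l_{2k}}a(s)^{h}$. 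You instead track the \emph{signed increment} $(t_{\sigma}s_{\sigma}^{r})^{s/(s+r)}(a(s)-1)$ under each elementary subdivision, so the sum moves monotonically along the chain $\Omega_{l_{1k}}\to\Lambda_{k,r}\to\Omega_{l_{2k}}$. The one point you leave as a remark — that $\Lambda_{k,r}$ is reached from $\Omega_{l_{1k}}$, and $\Omega_{l_{2k}}$ from $\Lambda_{k,r}$, by finitely many such subdivisions — does follow from the comparability argument you sketch together with the length bounds $l_{1k}\le|\sigma|\le l_{2k}$, and should be spelled out in a final write-up. Your route is a little more elementary (no auxiliary normalized weight), while the paper's normalization is a reusable device that also appears in \cite{Zhu:12}; the content is the same.
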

\begin{proof}
As we did in \cite[Lemma 2.5]{Zhu:12}, for every $s>0$ and $k\geq 1$, we write
\begin{eqnarray*}
I_k(s):=\sum_{\sigma\in\Omega_k}(t_\sigma s_\sigma^r)^{\frac{s}{s+r}};\;\;\xi(\sigma):=I^{-1}_k(s)(t_\sigma s_\sigma^r)^{\frac{s}{s+r}},\;\;\sigma\in\Omega_k.
\end{eqnarray*}
For $\sigma\in\Omega_k$ and $h\geq 0$, we set $\Lambda_h(\sigma):=\{\tau\in\Omega_{k+h}:\sigma\prec\tau\}$. Then, we have
\begin{eqnarray*}
\sum_{\tau\in\Lambda_h(\sigma)}\xi(\tau)&=&I_{k+h}(s)^{-1}(t_\sigma s_\sigma^r)^{\frac{s}{s+r}}\sum_{\omega\in\Omega_h} (t_\omega s_\omega^r)^{\frac{s}{s+r}}\\&=&I_{k+h}(s)^{-1}(t_\sigma s_\sigma^r)^{\frac{s}{s+r}}I_{k+h}(s)I_k(s)^{-1}=\xi(\sigma).
\end{eqnarray*}
It follows that, for every $k\geq 1$,
\begin{eqnarray*}
\sum_{\sigma\in\Lambda_{k,r}}I^{-1}_{|\sigma|}(s)(t_\sigma s_\sigma^r)^{\frac{s}{s+r}}=\sum_{\sigma\in\Lambda_{k,r}}\xi(\sigma)=\sum_{\sigma\in\Omega_k}\xi(\sigma)=1.
\end{eqnarray*}
As an immediate consequence, we obtain
\begin{eqnarray}\label{z4}
\min_{l_{1k}\leq h\leq l_{2k}}I_k(s)\leq\sum_{\sigma\in\Lambda_{k,r}}(t_\sigma s_\sigma^r)^{\frac{s}{s+r}}\leq\max_{l_{1k}\leq h\leq l_{2k}}I_k(s).
\end{eqnarray}
Note that, $a(s)>1$ if $s<\xi_{1,r}$ and $a(s)\leq1$ if $s\geq \xi_{1,r}$. So $I_h(s)=a(s)^h$ is increasing with respect to $h$ for all $s<\xi_{1,r}$ and it is decreasing when $s\geq \xi_{1,r}$. This and (\ref{z4}) imply (\ref{z4'}).
One can show the remaining assertions similarly.
\end{proof}
\begin{lemma}
For every $k\in\mathbb{N}$, we have
\begin{eqnarray}\label{z5}
\sum_{\sigma\in\Lambda_{k,r}}(h(\sigma))^{\frac{s}{s+r}}\left\{\begin{array}{ll}\geq p_0^{\frac{s}{s+r}}a(s)^{l_{1k}},\;{\rm if}\;\;s\leq \xi_{1,r} \\
\geq b(s)^{l_{1k}},\;\;\;\;\;\;\;\;{\rm if}\;\;s\leq \xi_{2,r}\end{array}\right..
\end{eqnarray}
\end{lemma}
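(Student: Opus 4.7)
The plan is to derive each of the two lower bounds independently, using the pointwise lower estimate on $\mu(E_\sigma)^{s/(s+r)}$ in display (\ref{z2}) together with the antichain-sum estimates already packaged in Lemma \ref{s1}.

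First I would recall that $h(\sigma)=\mu(E_\sigma)s_\sigma^r$, so
\[
h(\sigma)^{\frac{s}{s+r}}=\mu(E_\sigma)^{\frac{s}{s+r}}\,s_\sigma^{\frac{rs}{s+r}}.
\]
The second (lower) inequality of (\ref{z2}) with $k=|\sigma|$ gives
\[
\mu(E_\sigma)^{\frac{s}{s+r}}\ \geq\ \max\bigl\{(p_0 t_\sigma)^{\frac{s}{s+r}},\,p_\sigma^{\frac{s}{s+r}}\bigr\}.
\]
Multiplying by $s_\sigma^{rs/(s+r)}$ yields the two pointwise bounds
\[
h(\sigma)^{\frac{s}{s+r}}\ \geq\ p_0^{\frac{s}{s+r}}(t_\sigma s_\sigma^r)^{\frac{s}{s+r}},\qquad h(\sigma)^{\frac{s}{s+r}}\ \geq\ (p_\sigma s_\sigma^r)^{\frac{s}{s+r}}.
\]

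Next I would sum each inequality over $\sigma\in\Lambda_{k,r}$, reducing the claim to lower bounds for $\sum_{\sigma\in\Lambda_{k,r}}(t_\sigma s_\sigma^r)^{s/(s+r)}$ and $\sum_{\sigma\in\Lambda_{k,r}}(p_\sigma s_\sigma^r)^{s/(s+r)}$. These are precisely the quantities estimated in Lemma \ref{s1}. When $s\leq\xi_{1,r}$ one has $a(s)\geq 1$, so $a(s)^h$ is non-decreasing in $h$ and its minimum over $l_{1k}\leq h\leq l_{2k}$ is attained at $h=l_{1k}$; the first case of Lemma \ref{s1} then gives
\[
\sum_{\sigma\in\Lambda_{k,r}}(t_\sigma s_\sigma^r)^{\frac{s}{s+r}}\ \geq\ a(s)^{l_{1k}},
\]
which after multiplication by $p_0^{s/(s+r)}$ yields the first line of (\ref{z5}). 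The analogous assertion of Lemma \ref{s1} (with $b(s)$ and $\xi_{2,r}$ in place of $a(s)$ and $\xi_{1,r}$) applied under the hypothesis $s\leq\xi_{2,r}$ produces
\[
\sum_{\sigma\in\Lambda_{k,r}}(p_\sigma s_\sigma^r)^{\frac{s}{s+r}}\ \geq\ b(s)^{l_{1k}},
\]
and gives the second line.

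I do not anticipate any serious obstacle here: the proof is essentially the superadditivity $(x+y)^{s/(s+r)}\geq\max\{x^{s/(s+r)},y^{s/(s+r)}\}$ (valid since $s/(s+r)<1$) combined with the already-established Lemma \ref{s1}. The only point requiring slight care is matching the correct case of Lemma \ref{s1} to the correct endpoint $l_{1k}$ (as opposed to $l_{2k}$), which depends on the monotonicity of $a(s)^h$ and $b(s)^h$ in $h$; both are non-decreasing precisely in the regimes $s\leq\xi_{1,r}$ and $s\leq\xi_{2,r}$ respectively, exactly the hypotheses under which the two lower bounds in (\ref{z5}) are asserted.
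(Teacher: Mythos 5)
Your proof is correct and follows essentially the same route as the paper: the pointwise bound $h(\sigma)\geq p_0 t_\sigma s_\sigma^r$ (resp.\ $h(\sigma)\geq p_\sigma s_\sigma^r$), which you extract from (\ref{z2}) and the paper reads off directly from (\ref{z1}), followed by summation over $\Lambda_{k,r}$ and an appeal to Lemma \ref{s1} in the regime where $a(s)$ (resp.\ $b(s)$) is $\geq 1$. The only cosmetic difference is that you route through (\ref{z2}) rather than citing (\ref{z1}) directly.
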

\begin{proof}
Let $s\leq\xi_{1,r}$ be given. By (\ref{z1}) and (\ref{z4'}), we have
\begin{eqnarray*}
\sum_{\sigma\in\Lambda_{k,r}}(h(\sigma))^{\frac{s}{s+r}}\geq\sum_{\sigma\in\Lambda_{k,r}}(p_0t_\sigma s_\sigma^r)^{\frac{s}{s+r}}=p_0^{\frac{s}{s+r}}\sum_{\sigma\in\Lambda_{k,r}}(t_\sigma s_\sigma^r)^{\frac{s}{s+r}}\geq p_0^{\frac{s}{s+r}}a(s)^{l_{1k}}.
\end{eqnarray*}
One can see the remaining inequality in a similar manner.
\end{proof}
\begin{lemma}\label{s3}
$s\geq\xi_r=\max\{\xi_{1,r},\xi_{2,r}\}$ and $k\in\mathbb{N}$, we have
\begin{eqnarray*}
\sum_{\sigma\in\Lambda_{k,r}}h(\sigma)^{\frac{s}{s+r}}\leq\sum_{h=0}^{l_{1k}-1}a(s)^{l_{1k}-h}b(s)^{h}+\sum_{h=l_{1k}}^{l_{2k}}b(s)^h+b(s)^{l_{1k}}.
\end{eqnarray*}
\end{lemma}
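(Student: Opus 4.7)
The plan is to combine the additive decomposition of $\mu(E_\sigma)$ from Lemma~\ref{s5} with the maximal-antichain identity that already underlies Lemma~\ref{s1}, then carefully track prefixes. Since $s/(s+r) \leq 1$, subadditivity applied to (\ref{z1}) gives
\[
h(\sigma)^{\frac{s}{s+r}} \leq p_0^{\frac{s}{s+r}}\sum_{h=0}^{|\sigma|-1}\bigl(p_{\sigma|_h}s_{\sigma|_h}^r\bigr)^{\frac{s}{s+r}}\bigl(t_{\sigma^{(l)}_{-h}}s_{\sigma^{(l)}_{-h}}^r\bigr)^{\frac{s}{s+r}} + (p_\sigma s_\sigma^r)^{\frac{s}{s+r}},
\]
after using the multiplicativity $s_\sigma = s_{\sigma|_h}\,s_{\sigma^{(l)}_{-h}}$. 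Summing over $\sigma\in\Lambda_{k,r}$, the second piece is bounded by $b(s)^{l_{1k}}$ via Lemma~\ref{s1}, since $s\geq \xi_{2,r}$ forces $b(s)\leq 1$. This already yields the last summand $b(s)^{l_{1k}}$ in the target inequality.

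For the first piece I swap the order of summation, split by $h\in\{0,\ldots,l_{2k}-1\}$, and for each $h$ group by the prefix $\alpha:=\sigma|_h\in\Omega_h$. The key structural observation is that
\[
S_\alpha:=\{\beta\in\Omega^*:\alpha\ast\beta\in\Lambda_{k,r},\;|\beta|\geq 1\}
\]
is, whenever nonempty, a maximal antichain in $\Omega^*$: no prefix of $\alpha$ can lie in $\Lambda_{k,r}$ once $S_\alpha\neq\emptyset$ (by antichain incomparability), so every infinite extension $\alpha\ast\omega$ must acquire its unique $\Lambda_{k,r}$-prefix strictly beyond position $h$. The proof of Lemma~\ref{s1} uses $\Lambda_{k,r}$ only through maximality, so the identity $\sum_{\tau\in\Lambda}I_{|\tau|}(s)^{-1}(t_\tau s_\tau^r)^{s/(s+r)}=1$ extends verbatim to every maximal antichain $\Lambda\subset\Omega^*$; together with $a(s)\leq 1$ (from $s\geq \xi_{1,r}$) this gives
\[
\sum_{\beta\in S_\alpha}(t_\beta s_\beta^r)^{\frac{s}{s+r}} \leq a(s)^{\min_{\beta\in S_\alpha}|\beta|}.
\]

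For $h<l_{1k}$ every $\alpha\in\Omega_h$ has $S_\alpha\neq\emptyset$ with $\min|\beta|\geq l_{1k}-h$, while for $l_{1k}\leq h\leq l_{2k}-1$ the exponent is at least $1$ so the right side is at most $1$. Using $\sum_{\alpha\in\Omega_h}(p_\alpha s_\alpha^r)^{s/(s+r)}=b(s)^h$ and $p_0^{s/(s+r)}\leq 1$, the $h$-layer contributions total at most $a(s)^{l_{1k}-h}b(s)^h$ and $b(s)^h$ respectively. Adding these over $h$ and incorporating the earlier $b(s)^{l_{1k}}$ bound produces an inequality with $\sum_{h=l_{1k}}^{l_{2k}-1}b(s)^h$ in place of the claimed $\sum_{h=l_{1k}}^{l_{2k}}b(s)^h$; since the extra term $b(s)^{l_{2k}}\geq 0$, the stated inequality follows. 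The main obstacle is the prefix-bookkeeping in the middle step: verifying that $S_\alpha$ is a maximal antichain and that the two regimes $h<l_{1k}$ versus $h\geq l_{1k}$ align exactly with the two sums in the statement.
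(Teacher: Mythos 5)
Your proposal is correct, and it takes a genuinely different route from the paper. The paper proves Lemma~\ref{s3} by a recursive ``collapsing'' of the antichain: for each $\sigma\in\Omega_{l_{1k}}$ it looks at the restricted antichain $\Gamma_k(\sigma)$, repeatedly pulls the deepest layer back one level using the recursion (\ref{hsigma}) together with $a(s)\leq 1$, and accumulates the $h^{(2)}$-error terms bookkept via the auxiliary antichains $\widetilde{\Gamma}^{(i)}_k(\sigma)$; the middle sum $\sum_{h=l_{1k}}^{l_{2k}}b(s)^h$ arises exactly as these accumulated errors. You instead substitute the closed-form decomposition $h(\sigma)=\sum_{h=0}^{|\sigma|-1}p_0\,p_{\sigma|_h}t_{\sigma^{(l)}_{-h}}s_\sigma^r+p_\sigma s_\sigma^r$ from Lemma~\ref{s5}/(\ref{z8}), apply $x\mapsto x^{s/(s+r)}$-subadditivity once, regroup by prefix length $h$, and handle each $h$-layer with the maximal-antichain normalization identity $\sum_\tau a(s)^{-|\tau|}(t_\tau s_\tau^r)^{s/(s+r)}=1$ applied to the shifted antichains $S_\alpha$. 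Both arguments rest on the same two facts ($a(s),b(s)\leq 1$ and the antichain identity), but your version avoids the inductive bookkeeping entirely, is shorter, and in fact yields a marginally sharper upper limit ($\sum_{h=l_{1k}}^{l_{2k}-1}b(s)^h$ rather than $\sum_{h=l_{1k}}^{l_{2k}}b(s)^h$); everything in it — the finiteness and maximality of each $S_\alpha$, the split at $h=l_{1k}$, the bound $\min_{\beta\in S_\alpha}|\beta|\geq l_{1k}-h$, and the use of $p_0^{s/(s+r)}\leq 1$ — checks out. One minor advantage of the paper's recursion is that it is re-used almost verbatim (with reversed inequalities) in the proof of Lemma~\ref{s4}; your direct approach would not transfer there, but for Lemma~\ref{s3} alone it is the cleaner argument.
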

\begin{proof}
For $\sigma\in\Omega_{l_{1k}}$ and $h\geq l_{1,k}$, we write
\begin{eqnarray*}
\Gamma_k(\sigma):=\{\tau\in\Lambda_{k,r}:\sigma\prec\tau\},\;\;l_k(\sigma):=\max_{\tau\in\Gamma_k(\sigma)}|\tau|,\;
\Gamma_{k,h}(\sigma):=\Gamma_k(\sigma)\cap\Omega_h.
\end{eqnarray*}
Note that $\Lambda_{k,r}$ is a finite maximal anti-chain. For every $\tau\in\Gamma_{k,l_k(\sigma)}(\sigma)$,
we have that $\tau^{(r)}_{-1}\ast i\in\Lambda_{k,r}$ for all $1\leq i\leq N$. By (\ref{hsigma}), we deduce
\begin{eqnarray}\label{g1}
&&\sum_{i=1}^Nh^{(1)}(\tau^{(r)}_{-1}\ast i)^{\frac{s}{s+r}}=\sum_{i=1}^N\big(h^{(1)}(\tau^{(r)}_{-1})t_is_i^r+h^{(2)}(\tau^{(r)}_{-1})p_0t_is_i^r\big)^{\frac{s}{s+r}}\nonumber\\
&&\leq\sum_{i=1}^N\big(h^{(1)}(\tau^{(r)}_{-1})t_is_i^r\big)^{\frac{s}{s+r}}+\sum_{i=1}^N\big(h^{(2)}(\tau^{(r)}_{-1})p_0t_is_i^r\big)^{\frac{s}{s+r}}\\&&=
h^{(1)}(\tau^{(r)}_{-1})^{\frac{s}{s+r}}\sum_{i=1}^N(t_is_i^r)^{\frac{s}{s+r}}+h^{(2)}(\tau^{(r)}_{-1})^{\frac{s}{s+r}}\sum_{i=1}^N(p_0t_is_i^r)^{\frac{s}{s+r}}
\nonumber\\&&\leq h^{(1)}(\tau^{(r)}_{-1})^{\frac{s}{s+r}}+h^{(2)}(\tau^{(r)}_{-1})^{\frac{s}{s+r}},\label{g4}
\end{eqnarray}
where we used the fact that $a(s)\leq 1$ for $s\geq\xi_r$.
 For every $\tau\in\Gamma_{k,l_k(\sigma)}(\sigma)$, we have that $\tau^{(r)}_{-h}\notin\Lambda_{k,r}$ for any $1\leq h\leq |\tau|-1$. So, for every $1\leq i\leq N$, with $\tau^{(r)}_{-2}\ast i\neq\tau^{(r)}_{-1}$, we have exactly the following two possible cases:
\begin{eqnarray}\label{g2}
\tau^{(r)}_{-2}\ast i\in\Lambda_{k,r},\;\;{\rm or}\;\;\tau^{(r)}_{-2}\ast i\ast j\in\Lambda_{k,r} \;{\rm for\; all}\;\; j=1,2\ldots,N.
\end{eqnarray}
In both cases, the contribution of $\rho^{(i)}:=\tau^{(r)}_{-2}\ast i$ to the sum
\[
\sum_{\tau\in\Gamma_k(\sigma)}h^{(1)}(\tau)^{\frac{\xi_r}{\xi_r+r}}
\]
is not greater than $ h^{(1)}(\rho^{(i)})^{\frac{s}{s+r}}+h^{(2)}(\rho^{(i)})^{\frac{s}{s+r}}$. We write
\begin{eqnarray} \label{g5} &&\widetilde{\Gamma}_k^{(1)}(\sigma):=(\Gamma_k(\sigma)\setminus\Omega_{l_k(\sigma)})\cup\{\tau^{(r)}_{-1}:\tau\in\Gamma_{k,l_k(\sigma)}(\sigma)\};\\
&&\widetilde{\Gamma}_{k,l_k(\sigma)-1}^{(1)}(\sigma):=\widetilde{\Gamma}_k^{(1)}(\sigma)\cap\Omega_{l_k(\sigma)-1}\nonumber.
 \end{eqnarray}
 By (\ref{g4}) and (\ref{g2}), we deduce
\begin{eqnarray*}
\sum_{\tau\in\Gamma_k(\sigma)}h^{(1)}(\tau)^{\frac{\xi_r}{\xi_r+r}}\leq\sum_{\tau\in\widetilde{\Gamma}^{(1)}_k(\sigma)}h^{(1)}(\tau)^{\frac{\xi_r}{\xi_r+r}}
+\sum_{\tau\in\widetilde{\Gamma}_{k,l_k(\sigma)-1}^{(1)}(\sigma)}h^{(2)}(\tau)^{\frac{s}{s+r}}.
\end{eqnarray*}
Again, using (\ref{hsigma}), for $\tau\in\widetilde{\Gamma}_{k,l_k(\sigma)-1}^{(1)}(\sigma)$, (\ref{g4}) holds. Write
\begin{eqnarray}\label{g11}
&&\widetilde{\Gamma}_k^{(2)}(\sigma):=(\widetilde{\Gamma}_k^{(1)}(\sigma)\setminus\Omega_{l_k(\sigma)-1})
\cup\{\tau^{(r)}_{-1}:\tau\in\widetilde{\Gamma}_{k,l_k(\sigma)-1}^{(1)}(\sigma)\}\\
&&\widetilde{\Gamma}_{k,l_k(\sigma)-2}^{(2)}(\sigma):=\widetilde{\Gamma}_k^{(2)}(\sigma)\cap\Omega_{l_k(\sigma)-2}.\nonumber
\end{eqnarray}
 Then, in a similar manner, one can see
\begin{eqnarray*}
\sum_{\tau\in\Gamma_k(\sigma)}h^{(1)}(\tau)^{\frac{\xi_r}{\xi_r+r}}\leq\sum_{\tau\in\widetilde{\Gamma}^{(2)}_k(\sigma)}h^{(1)}(\tau)^{\frac{\xi_r}{\xi_r+r}}
+\sum_{i=1}^2\sum_{\tau\in\widetilde{\Gamma}_{k,l_k(\sigma)-i}^{(i)}(\sigma)}h^{(2)}(\tau)^{\frac{s}{s+r}}.
\end{eqnarray*}
By repeating the above estimate finitely many times, we obtain
\begin{eqnarray*}
\sum_{\tau\in\Gamma_k(\sigma)}h^{(1)}(\tau)^{\frac{s}{s+r}}\leq h^{(1)}(\sigma)^{\frac{s}{s+r}}+\sum_{h=0}^{l_k(\sigma)-l_{1k}}\sum_{\tau\in\Lambda_h(\sigma)}h^{(2)}(\tau)^{\frac{s}{s+r}}.
\end{eqnarray*}
Note that the preceding argument holds for an arbitrary word in $\Omega_{1k}$. Thus,
\begin{eqnarray*}
&&\sum_{\sigma\in\Lambda_{k,r}}h^{(1)}(\sigma)^{\frac{s}{s+r}}=\sum_{\sigma\in\Omega_{l_{1k}}}\sum_{\tau\in\Gamma_k(\sigma)}h^{(1)}(\tau)^{\frac{s}{s+r}}
\\&&\leq\sum_{\sigma\in\Omega_{l_{1k}}}h^{(1)}(\sigma)^{\frac{s}{s+r}}+
\sum_{\sigma\in\Omega_{l_{1k}}}\sum_{h=0}^{l_k(\sigma)-l_{1k}}\sum_{\tau\in\Lambda_h(\sigma)}h^{(2)}(\tau)^{\frac{s}{s+r}}
\end{eqnarray*}
As $l_k(\sigma)\leq l_{2k}$ for all $\sigma\in\Omega_{l_{1k}}$, by Lemma \ref{s0}, we further deduce
\begin{eqnarray}\label{z3}
\sum_{\sigma\in\Lambda_{k,r}}h^{(1)}(\sigma)^{\frac{s}{s+r}}&\leq&\sum_{\sigma\in\Omega_{l_{1k}}}h^{(1)}(\sigma)^{\frac{s}{s+r}}+
\sum_{\sigma\in\Omega_{l_{1k}}}\sum_{h=0}^{l_{2k}-l_{1k}}\sum_{\tau\in\Lambda_h(\sigma)}h^{(2)}(\tau)^{\frac{s}{s+r}}\nonumber\\
&=&\sum_{\sigma\in\Omega_{l_{1k}}}h^{(1)}(\sigma)^{\frac{s}{s+r}}+
\sum_{h=l_{1k}}^{l_{2k}}\sum_{\tau\in\Omega_h(\sigma)}h^{(2)}(\tau)^{\frac{s}{s+r}}\nonumber\\
&\leq&\sum_{h=0}^{l_{1k}-1}a(s)^{l_{1k}-h}b(s)^{h}+\sum_{h=l_{1k}}^{l_{2k}}b(s)^h.
\end{eqnarray}
Observe that $b(s)\leq 1$ for all $s\geq\xi_r$. By Lemma \ref{s1}, we have
\begin{eqnarray*}
\sum_{\sigma\in\Lambda_{k,r}}h^{(2)}(\sigma)^{\frac{s}{s+r}}=\sum_{\sigma\in\Lambda_{k,r}}(p_\sigma s_\sigma)^{\frac{s}{s+r}}\leq\sum_{\sigma\in\Omega_{l_{1k}}}(p_\sigma s_\sigma)^{\frac{s}{s+r}}=b(s)^{l_{1k}}.
\end{eqnarray*}
This, together with (\ref{z3}), yields
\begin{eqnarray*}
\sum_{\sigma\in\Lambda_{k,r}}h(\sigma)^{\frac{s}{s+r}}&\leq&\sum_{\sigma\in\Lambda_{k,r}}h^{(1)}(\sigma)^{\frac{s}{s+r}}+
\sum_{\sigma\in\Lambda_{k,r}}h^{(2)}(\sigma)^{\frac{s}{s+r}}\\
&\leq&\sum_{h=0}^{l_{1k}-1}a(s)^{l_{1k}-h}b(s)^{h}+\sum_{h=l_{1k}}^{l_{2k}}b(s)^h+b(s)^{l_{1k}}.
\end{eqnarray*}
This completes the proof of the lemma.
\end{proof}
\begin{lemma}\label{s4}
Assume that $\xi_r=\xi_{1,r}=\xi_{2,r}$. Then we have
\begin{eqnarray*}
\sum_{\sigma\in\Lambda_{k,r}}h(\sigma)^{\frac{\xi_r}{\xi_r+r}}\geq (p_0l_{1k})^{\frac{\xi_r}{\xi_r+r}}.
\end{eqnarray*}
\end{lemma}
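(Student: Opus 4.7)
Set $\alpha := \xi_r/(\xi_r + r) \in (0,1)$. Since $h(\sigma) \geq h^{(1)}(\sigma)$, the plan is to bound $\sum_{\sigma\in\Lambda_{k,r}} h^{(1)}(\sigma)^\alpha$ from below. Using the explicit formula (\ref{z8}), write
\[
h^{(1)}(\sigma) = p_0 \sum_{h=0}^{|\sigma|-1} c_h(\sigma), \qquad c_h(\sigma) := (p_{\sigma|_h} s_{\sigma|_h}^r)(t_{\sigma^{(l)}_{-h}} s_{\sigma^{(l)}_{-h}}^r).
\]
Because $|\sigma| \geq l_{1k}$ for every $\sigma \in \Lambda_{k,r}$, the plan is to discard the terms with $h \geq l_{1k}$ and apply Jensen's inequality to the concave map $x \mapsto x^\alpha$ across the remaining $l_{1k}$ terms, obtaining
\[
h^{(1)}(\sigma)^\alpha \;\geq\; p_0^\alpha \Bigl(\sum_{h=0}^{l_{1k}-1} c_h(\sigma)\Bigr)^\alpha \;\geq\; p_0^\alpha\, l_{1k}^{\alpha - 1} \sum_{h=0}^{l_{1k}-1} c_h(\sigma)^\alpha.
\]

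After summing over $\sigma \in \Lambda_{k,r}$ and interchanging, matters reduce to showing
\[
\sum_{\sigma \in \Lambda_{k,r}} c_h(\sigma)^\alpha \;=\; 1 \qquad \text{for each } h \in \{0, \ldots, l_{1k}-1\}.
\]
For this I would fix such an $h$; since $h < l_{1k}$, every $\sigma \in \Lambda_{k,r}$ satisfies $|\sigma| > h$ and decomposes uniquely as $\sigma = \omega \ast \tau$ with $\omega := \sigma|_h \in \Omega_h$ and $\tau := \sigma^{(l)}_{-h}$. The maximal antichain property of $\Lambda_{k,r}$, together with the fact that no word in $\Lambda_{k,r}$ has length $\leq h$, shows that for every $\omega \in \Omega_h$ the set $A_\omega := \{\tau : \omega \ast \tau \in \Lambda_{k,r}\}$ is itself a finite maximal antichain in $\Omega^{*}$, and that these sets partition $\Lambda_{k,r}$. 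The hypothesis $\xi_{1,r} = \xi_{2,r} = \xi_r$ gives $a(\xi_r) = b(\xi_r) = 1$, and a standard induction on antichain depth (replace any word $\omega$ by its $N$ children and invoke $\sum_i (t_i s_i^r)^\alpha = 1$) then yields
\[
\sum_{\tau \in \Gamma}(t_\tau s_\tau^r)^\alpha \;=\; \sum_{\tau \in \Gamma}(p_\tau s_\tau^r)^\alpha \;=\; 1
\]
for every finite maximal antichain $\Gamma \subset \Omega^{*}$. Applying the first equality to each $A_\omega$ and the second to $\Omega_h$ factors the double sum:
\[
\sum_{\sigma \in \Lambda_{k,r}} c_h(\sigma)^\alpha \;=\; \sum_{\omega \in \Omega_h}(p_\omega s_\omega^r)^\alpha \sum_{\tau \in A_\omega}(t_\tau s_\tau^r)^\alpha \;=\; b(\xi_r)^h \;=\; 1.
\]

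Assembling the two steps,
\[
\sum_{\sigma \in \Lambda_{k,r}} h(\sigma)^\alpha \;\geq\; \sum_{\sigma \in \Lambda_{k,r}} h^{(1)}(\sigma)^\alpha \;\geq\; p_0^\alpha\, l_{1k}^{\alpha - 1} \cdot l_{1k} \;=\; (p_0 l_{1k})^\alpha,
\]
which is exactly the claim. The delicate bookkeeping is that Jensen must be applied to precisely $l_{1k}$ terms so that the factor $l_{1k}^{\alpha-1}$ produced by concavity cancels against the factor $l_{1k}$ coming from summing over $h$ to give the correct power $l_{1k}^\alpha$. The main conceptual obstacle, and the place where the hypothesis really bites, is recognizing that $\xi_{1,r} = \xi_{2,r}$ is precisely what turns both $(t_\tau s_\tau^r)^\alpha$ and $(p_\tau s_\tau^r)^\alpha$ into unit-mass invariants on finite maximal antichains at the same time, which is what enables the factorization $\sum_\sigma c_h(\sigma)^\alpha = 1$.
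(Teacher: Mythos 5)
Your proposal is correct, and it takes a genuinely different route from the paper. The paper first collapses the antichain $\Lambda_{k,r}$ down to the level set $\Omega_{l_{1k}}$, replacing $N$ siblings $\tau^{(r)}_{-1}\ast i$ by the parent $\tau^{(r)}_{-1}$ one level at a time and using $a(\xi_r)=1$ at each step (the analogue of inequality (\ref{g12}), reusing the $\widetilde{\Gamma}^{(i)}_k$ bookkeeping from Lemma~\ref{s3}) to arrive at $\sum_{\tau\in\Lambda_{k,r}}h^{(1)}(\tau)^{\alpha}\geq\sum_{\sigma\in\Omega_{l_{1k}}}h^{(1)}(\sigma)^{\alpha}$; only then does it apply the concavity (power--mean/H\"older with exponent $<1$) estimate to each $\sigma\in\Omega_{l_{1k}}$, where the sum in (\ref{z8}) has exactly $l_{1k}$ terms, and compute $\sum_{\sigma\in\Omega_{l_{1k}}}\sum_{h=0}^{l_{1k}-1}c_h(\sigma)^{\alpha}=\sum_{h=0}^{l_{1k}-1}a(\xi_r)^{l_{1k}-h}b(\xi_r)^{h}=l_{1k}$. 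You bypass the collapse entirely: you apply the same concavity estimate directly on each $\sigma\in\Lambda_{k,r}$, after truncating $h^{(1)}(\sigma)$ at its first $l_{1k}$ summands (legitimate since $|\sigma|\geq l_{1k}$ and all terms are nonnegative), and then evaluate $\sum_{\sigma\in\Lambda_{k,r}}c_h(\sigma)^{\alpha}$ by factoring over $\Omega_h\times A_\omega$ and invoking the invariance $\sum_{\tau\in\Gamma}(t_\tau s_\tau^r)^{\alpha}=1$ over \emph{arbitrary} finite maximal antichains $\Gamma$ together with $b(\xi_r)^h=1$. Both proofs hinge on the same two tools (concavity of $x\mapsto x^{\alpha}$ and the fact that $a(\xi_r)=b(\xi_r)=1$ makes both cylinder weights into antichain invariants), but your version substitutes one clean invariance statement for an iterative antichain-surgery argument, which makes it a bit more self-contained and avoids any dependence on the constructions of Lemma~\ref{s3}.
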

\begin{proof}
By the hypothesis, we have that $a(\xi_r)=b(\xi_r)=1$. As we noted in the proof of Lemma \ref{s3}, for every $\tau\in\Gamma_{k,l_k(\sigma)}(\sigma)$ and $1\leq i\leq N$, we have, $\tau^{(r)}_{-1}\ast i\in\Lambda_{k,r}$.
In contrast to (\ref{g1}), we have
\begin{eqnarray}\label{g12}
\sum_{i=1}^Nh^{(1)}(\tau^{(r)}_{-1}\ast i)^{\frac{\xi_r}{\xi_r+r}}&=&\sum_{i=1}^N\big(h^{(1)}(\tau^{(r)}_{-1})t_is_i^r+h^{(2)}(\tau^{(r)}_{-1})p_0t_is_i^r\big)^{\frac{\xi_r}{\xi_r+r}}\nonumber\\
&\geq&\sum_{i=1}^N\big(h^{(1)}(\tau^{(r)}_{-1})t_is_i^r\big)^{\frac{\xi_r}{\xi_r+r}}\nonumber\\&=&
h^{(1)}(\tau^{(r)}_{-1})^{\frac{\xi_r}{\xi_r+r}}a(\xi_r)=h^{(1)}(\tau^{(r)}_{-1})^{\frac{\xi_r}{\xi_r+r}}.
\end{eqnarray}
Let $\widetilde{\Gamma}^{(i)}_k(\sigma),i=1,2$, be as defined in (\ref{g5}) and (\ref{g11}). By (\ref{g2}) and (\ref{g12}),
\begin{eqnarray*}
\sum_{\tau\in\Gamma_k(\sigma)}h^{(1)}(\tau)^{\frac{\xi_r}{\xi_r+r}}\geq\sum_{\tau\in\widetilde{\Gamma}^{(1)}_k(\sigma)}h^{(1)}(\tau)^{\frac{\xi_r}{\xi_r+r}}
\geq\sum_{\tau\in\widetilde{\Gamma}^{(2)}_k(\sigma)}h^{(1)}(\tau)^{\frac{\xi_r}{\xi_r+r}}.
\end{eqnarray*}
Repeating the above process finitely many times, we obtain
\begin{eqnarray*}
\sum_{\tau\in\Gamma_k(\sigma)}h^{(1)}(\tau)^{\frac{\xi_r}{\xi_r+r}}\geq h^{(1)}(\sigma)^{\frac{\xi_r}{\xi_r+r}}.
\end{eqnarray*}
We apply the preceding inequality to all $\sigma\in\Lambda_{k,r}$. Then we have
\begin{eqnarray}\label{g3}
\sum_{\tau\in\Lambda_{k,r}}h^{(1)}(\tau)^{\frac{\xi_r}{\xi_r+r}}\geq\sum_{\sigma\in\Omega_{l_{1k}}}h^{(1)}(\sigma)^{\frac{\xi_r}{\xi_r+r}}
\end{eqnarray}
For every $\sigma\in\Omega_{l_{1k}}$, using (\ref{z8}) and H\"{o}lder's inequality (with exponent less than one), we deduce
\begin{eqnarray*}
h^{(1)}(\sigma)=p_0\sum_{h=0}^{l_{1k}-1}p_{\sigma|_h}t_{\sigma^{(l)}_{-h}}s_\sigma^r
\geq p_0\bigg(\sum_{h=0}^{l_{1k}-1}(p_{\sigma|_h}t_{\sigma^{(l)}_{-h}}s_\sigma^r)^{\frac{\xi_r}{\xi_r+r}}\bigg)^{\frac{\xi_r+r}{\xi_r}}\cdot l_{1k}^{-\frac{r}{\xi_r}}.
\end{eqnarray*}
As an immediate consequence, we obtain
\begin{eqnarray*}
h(\sigma)^{\frac{\xi_r}{\xi_r+r}}\geq h^{(1)}(\sigma)^{\frac{\xi_r}{\xi_r+r}}\geq p_0^{\frac{\xi_r}{\xi_r+r}}
\sum_{h=0}^{l_{1k}-1}(t_{\sigma^{(l)}_{-h}}p_{\sigma|_h}s_\sigma^r)^{\frac{\xi_r}{\xi_r+r}}\cdot l_{1k}^{-\frac{r}{\xi_r+r}}.
\end{eqnarray*}
Hence, by (\ref{g3}), it follows that
\begin{eqnarray*}
\sum_{\sigma\in\Lambda_{k,r}}h(\sigma)^{\frac{\xi_r}{\xi_r+r}}&\geq&\sum_{\sigma\in\Omega_{l_{1k}}}p_0^{\frac{\xi_r}{\xi_r+r}}
\sum_{h=0}^{l_{1k}-1}(p_{\sigma|_h}t_{\sigma^{(l)}_{-h}}s_\sigma^r)^{\frac{\xi_r}{\xi_r+r}}\cdot l_{1k}^{-\frac{r}{\xi_r+r}}\\
&=&p_0^{\frac{\xi_r}{\xi_r+r}}l_{1k}^{-\frac{r}{\xi_r+r}}\sum_{h=0}^{l_{1k}-1}
\sum_{\sigma\in\Omega_{l_{1k}}}(p_{\sigma|_h}t_{\sigma^{(l)}_{-h}}s_\sigma^r)^{\frac{\xi_r}{\xi_r+r}}
\\&=&p_0^{\frac{\xi_r}{\xi_r+r}}l_{1k}^{-\frac{r}{\xi_r+r}}
\sum_{h=0}^{l_{1k}-1}a(\xi_r)^{l_{1k}-h}b(\xi_r)^h\\&=&p_0^{\frac{\xi_r}{\xi_r+r}}l_{1k}^{1-\frac{r}{\xi_r+r}}
=(p_0l_{1k})^{\frac{\xi_r}{\xi_r+r}}.
\end{eqnarray*}
This completes the proof of the Lemma.
\end{proof}

Recall that $ N_{k,r}$ denotes the cardinality of $\Lambda_{k,r}$. As in \cite{Zhu:12}, we write
\begin{eqnarray*}
\underline{P}_r^s(\mu):=\liminf_{k\to\infty} N_{k,r}^{\frac{1}{s}}e_{ N_{k,r},r}(\mu);\;\;
\overline{P}_r^s(\mu):=\limsup_{k\to\infty} N_{k,r}^{\frac{1}{s}}e_{ N_{k,r},r}(\mu)
\end{eqnarray*}
\begin{lemma}\label{g13}
$\underline{Q}_r^{s}(\mu)>0$ iff $\underline{P}_r^s(\mu)>0$ and $\underline{Q}_r^{s}(\mu)<\infty$ iff $\underline{P}_r^s(\mu)<\infty$.
The same is true if we replace $\underline{Q}_r^{s}(\mu),\underline{P}_r^s(\mu)$ with $\overline{Q}_r^{s}(\mu),\overline{P}_r^s(\mu)$.
\end{lemma}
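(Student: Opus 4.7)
The plan is to prove all four equivalences at once by a sandwich argument. The key step is to show that $N_{k+1,r}/N_{k,r}$ is bounded above by a constant $M$ independent of $k$. Once this is in hand, since $n\mapsto e_{n,r}(\mu)$ is non-increasing, any $n$ with $N_{k,r}\leq n<N_{k+1,r}$ satisfies
\begin{eqnarray*}
M^{-1/s}N_{k+1,r}^{1/s}e_{N_{k+1,r},r}(\mu)\leq n^{1/s}e_{n,r}(\mu)\leq M^{1/s}N_{k,r}^{1/s}e_{N_{k,r},r}(\mu),
\end{eqnarray*}
so taking $\liminf$ (resp.~$\limsup$) as $n\to\infty$ traps $\underline{Q}_r^s(\mu)$ between $M^{-1/s}\underline{P}_r^s(\mu)$ and $\underline{P}_r^s(\mu)$ (and similarly for $\overline{Q}_r^s(\mu)$ and $\overline{P}_r^s(\mu)$), which immediately gives both claimed iff statements.

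For the ratio bound, set $\overline{\eta}_r:=\max_{1\leq i\leq N}\max\{t_i,\,p_0t_i+p_i\}s_i^r$; since all $s_i<1$ one has $\overline{\eta}_r<1$. A direct computation from (\ref{hsigma}) gives the child estimate $h(\sigma\ast i)\leq\overline{\eta}_r h(\sigma)$ for every $\sigma\in\Omega^*$ and $1\leq i\leq N$. Using this together with the defining threshold inequalities, one sees that $\Lambda_{k+1,r}$ refines $\Lambda_{k,r}$ in the sense that each element of $\Lambda_{k+1,r}$ either lies in $\Lambda_{k,r}$ or is a strict extension of a unique word of $\Lambda_{k,r}$. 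Fix $k_0$ so large that $\overline{\eta}_r<k/(k+1)$ for every $k\geq k_0$. For such $k$ and any $\sigma\in\Lambda_{k,r}\setminus\Lambda_{k+1,r}$, the inequality $h(\sigma)<k^{-1}\underline{\eta}_r$ and the child estimate force $h(\sigma\ast i)<\overline{\eta}_r k^{-1}\underline{\eta}_r<(k+1)^{-1}\underline{\eta}_r$, placing $\sigma\ast i$ into $\Lambda_{k+1,r}$. Therefore each $\sigma\in\Lambda_{k,r}$ contributes at most $N$ descendants to $\Lambda_{k+1,r}$, yielding $N_{k+1,r}\leq N\cdot N_{k,r}$ for $k\geq k_0$; the remaining finitely many ratios are trivially bounded, and one takes $M:=\max\{N,\max_{1\leq k<k_0}N_{k+1,r}/N_{k,r}\}$.

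The main obstacle is this ratio bound; the crux is that $k/(k+1)$ eventually exceeds the fixed contraction factor $\overline{\eta}_r$, which forces every strict descendant of a word of $\Lambda_{k,r}$ in $\Lambda_{k+1,r}$ to have length exactly one. Once this is clear, the sandwich argument is entirely routine bookkeeping on $\liminf$ and $\limsup$.
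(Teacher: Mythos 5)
Your proof is correct and takes essentially the paper's own approach: the same child estimate $h(\sigma\ast i)\leq\overline{\eta}_r\,h(\sigma)$, the same conclusion $N_{k,r}\leq N_{k+1,r}\leq N\,N_{k,r}$ for large $k$, and the same sandwich comparison between $n^{1/s}e_{n,r}(\mu)$ and the subsequence along $N_{k,r}$ (which the paper outsources by citing \cite{Zhu:12}). One small omission: to place $\sigma\ast i$ in $\Lambda_{k+1,r}$ you must also check $h(\sigma)\geq(k+1)^{-1}\underline{\eta}_r$, not merely $h(\sigma\ast i)<(k+1)^{-1}\underline{\eta}_r$; this does hold because $\sigma\in\Lambda_{k,r}\setminus\Lambda_{k+1,r}$ and the predecessor inequality $h(\sigma^{(r)}_{-1})\geq k^{-1}\underline{\eta}_r>(k+1)^{-1}\underline{\eta}_r$ is automatic, leaving $h(\sigma)\geq(k+1)^{-1}\underline{\eta}_r$ as the only way for $\sigma$ to fail membership in $\Lambda_{k+1,r}$.
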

\begin{proof}
For every $1\leq i\leq N$, we have
\begin{eqnarray}\label{g10}
h(\sigma\ast i)&=&h^{(1)}(\sigma)t_is_i^r+h^{(2)}(\sigma)p_0t_is_i^r+h^{(2)}(\sigma)p_is_i^r\nonumber\\&\leq&
h(\sigma)\max_{1\leq j\leq N}\{t_j,p_0t_j+p_j\}s_j^r.
\end{eqnarray}
Let $\overline{\eta}_r:=\max_{1\leq j\leq N}\{t_j,p_0t_j+p_j\}s_j^r$. For all $k>(1-\overline{\eta}_r)^{-1}-1$ and every $\sigma\in\Lambda_k$, by (\ref{g10}), we have,
\begin{eqnarray*}
h(\sigma\ast i)\leq h(\sigma)\overline{\eta}_r<(k^{-1}\underline{\eta}_r)\overline{\eta}_r<(k+1)^{-1}\underline{\eta}_r.
\end{eqnarray*}
It follows that, $ N_{k,r}\leq \phi_{k+1,r}\leq N  N_{k,r}$. So, (2.9),(2.10) of \cite{Zhu:12} hold:
\begin{eqnarray*}
\underline{P}_r^s(\mu)\geq\underline{Q}_r^s(\mu)\geq N^{-\frac{1}{s}}\underline{P}_r^s(\mu),\;
\overline{P}_r^s(\mu)\leq\overline{Q}_r^s(\mu)\leq N^{\frac{1}{s}}\overline{P}_r^s(\mu).
\end{eqnarray*}
This completes the proof of the lemma.
\end{proof}
\emph{Proof of Theorem \ref{mthm1}}

For $\xi_r=\max\{\xi_{1,r},\xi_{2,r}\}$, by (\ref{z5}), we have
\begin{eqnarray*}
\sum_{\sigma\in\Lambda_{k,r}}(h(\sigma))^{\frac{\xi_r}{\xi_r+r}}\geq\left\{\begin{array}{ll}
p_0^{\frac{\xi_r}{\xi_r+r}}a(\xi_r)^{l_{1k}}=p_0^{\frac{\xi_r}{\xi_r+r}}\;\;\;\;{\rm if}\;\; \xi_r=\xi_{1,r}\\
b(\xi_r)^{l_{1k}}=1,\;\;\;\;\;\;\;\;\;\;\;\;\;\;\;\;\;\;{\rm if}\;\;\xi_r=\xi_{2,r}\end{array}\right..
\end{eqnarray*}
Thus, by the definition of $\Lambda_{k,r}$ (see (\ref{gamman})), we have
\begin{equation*}
 N_{k,r}\geq p_0^{\frac{\xi_r}{\xi_r+r}}(k\underline{\eta}_r^{-1})^{\frac{\xi_r}{\xi_r+r}}.
\end{equation*}
By Lemma \ref{errorestimate} and H\"{o}lder's inequality with exponent less than one,
\begin{eqnarray*}
 N_{k,r}^{\frac{r}{\xi_r}}e^r_{ N_{k,r},r}(\mu)&\geq& N_{k,r}^{\frac{r}{\xi_r}}\sum_{\sigma\in\Lambda_{k,r}}h(\sigma)
\geq N_{k,r}^{\frac{r}{\xi_r}}\bigg(\sum_{\sigma\in\Lambda_{k,r}}h(\sigma)^{\frac{\xi_r}{\xi_r+r}}\bigg)^{\frac{\xi_r+r}{\xi_r}}
 N_{k,r}^{-\frac{r}{\xi_r}}\\&\geq& (p_0^{\frac{r}{\xi_r+r}})^{\frac{\xi_r+r}{\xi_r}}=p_0.
\end{eqnarray*}
This implies that $\underline{P}_r^{\xi_r}(\mu)>0$. Hence, by Lemma \ref{g13} \cite[Proposition 11.3]{GL:00} (see also \cite{PK:01}), we conclude that
\begin{equation}\label{g15}
\underline{Q}_r^{\xi_r}(\mu)>0,\;\;{\rm and}\;\;\underline{D}_r(\mu)\geq \xi_r.
\end{equation}

Let $s>\xi_r$ be given. Then, $c(s):=\max\{a(s),b(s)\}<1$. By Lemma \ref{s3},
\begin{eqnarray*}
\sum_{\sigma\in\Lambda_{k,r}}(h(\sigma))^{\frac{s}{s+r}}&\leq&\sum_{h=0}^{l_{1k}}a(s)^{l_{1k}-h}b(s)^{h}+\sum_{h=l_{1k}}^{l_{2k}}b(s)^h+b(s)^{l_{1k}}\\
&\leq& l_{1k}c(s)^{l_{1k}}+b(s)^{l_{1k}}(1-b(s))^{-1}+b(s)^{l_{1k}}=:d_k(s).
\end{eqnarray*}
Note that $d_k(s)\to 0$ as $k$ tends to infinity. From this and (\ref{gamman}), we deduce that, there exists some positive real number $d(s)$ such that
\begin{eqnarray*}
 N_{k,r}\leq d(s)(k\underline{\eta}_r^{-2})^{\frac{s}{s+r}}\;\;{\rm for\; all\;large}\;\; k.
\end{eqnarray*}
 Hence, by Lemma \ref{errorestimate}, we have
\begin{eqnarray}\label{z6}
 N_{k,r}^{\frac{r}{s}}e^r_{ N_{k,r},r}(\mu)&\leq& N_{k,r}^{\frac{r}{s}}\sum_{\sigma\in\Lambda_{k,r}}h(\sigma)
\leq N_{k,r}^{\frac{r}{s}}\sum_{\sigma\in\Lambda_{k,r}}(h(\sigma))^{\frac{s}{s+r}}
(k^{-1}\underline{\eta}_r)^{\frac{r}{s+r}}\nonumber\\&\leq& d(s)^{\frac{r}{s}}(k\underline{\eta}_r^{-2})^{\frac{r}{s+r}} d(s)(k^{-1}\underline{\eta}_r )^{\frac{r}{s+r}}
=d(s)^{\frac{s+r}{s}}\underline{\eta}_r^{-\frac{r}{s+r}}.
\end{eqnarray}
It follows that $\overline{P}_r^s(\mu)<\infty$, so $\overline{Q}_r^s(\mu)<\infty$ by Lemma \ref{g13}. As a consequence of this and \cite[Proposition 11.3]{GL:00}, we obtain that $\overline{D}_r(\mu)\leq s$. By the arbitrariness of $s$, we have, $\overline{D}_r(\mu)\leq \xi_r$. Then, in view of (\ref{g15}), we conclude that $D_r(\mu)$ exists and equals $\xi_r$.

(a) Assume that $\xi_r=\xi_{1,r}>\xi_{2,r}$. Then $a(\xi_r)=1$ and $b(\xi_r)<1$. Hence, by using Lemma \ref{s3}, we deduce
\begin{eqnarray*}
\sum_{\sigma\in\Lambda_{k,r}}(h(\sigma))^{\frac{\xi_r}{\xi_r+r}}&\leq&
\sum_{h=0}^{l_{1k}-1}a(\xi_r)^{l_{1k}-h}b(\xi_r)^{h}+\sum_{h=l_{1k}}^{l_{2k}}b(\xi_r)^h+b(\xi_r)^{l_{1k}}\\
&\leq&\sum_{h=0}^{l_{2k}}b(\xi_r)^h+b(\xi_r)^{l_{1k}}\leq\frac{1}{1-b(\xi_r)}+1<\infty.
\end{eqnarray*}
Thus, as we did for (\ref{z6}), one can see that  $\overline{P}_r^{\xi_r}(\mu)<\infty$. By Lemma \ref{g13}, it follows that $\overline{Q}_r^{\xi_r}(\mu)<\infty$. This and (\ref{g15}) complete the proof of (a).

(b) Assume that $\xi_r=\xi_{1,r}=\xi_{2,r}$. By Lemma \ref{errorestimate} and H\"{o}lder's inequality (with exponent less than one), we deduce
\begin{eqnarray*}
e^r_{ N_{k,r},r}(\mu)\geq D\sum_{\sigma\in\Lambda_{k,r}}h(\sigma\geq\bigg(\sum_{\sigma\in\Lambda_{k,r}}h(\sigma)^{\frac{\xi_r}{\xi_r+r}}\bigg)^{\frac{\xi_r+r}{\xi_r}}\cdot N_{k,r}^{-\frac{r}{\xi_r}}.
\end{eqnarray*}
Using this and Lemma \ref{s4}, we conclude that
\begin{eqnarray*}
\underline{P}_r^{\xi_r}(\mu)=\liminf_{k\to\infty} N_{k,r}^{\frac{1}{\xi_r}}e_{ N_{k,r},r}(\mu)\geq \liminf_{k\to\infty}(p_0l_{1k})^{\frac{1}{r}}=\infty.
\end{eqnarray*}
Thus, by Lemma \ref{g13}, Theorem \ref{mthm1} (b) follows.

\begin{remark}{\rm
(r1) In case $\xi_{1,r}<\xi_{2,r}$, it remains open whether the upper quantization coefficient is finite or not.
(r2) By \cite{Zhu:08}, $\mu$ is a self-similar measure associated with $(f_i)_{i=1}^N$ if and only if $p_i+p_0t_i=t_i$ for all $i=1,\ldots, N$. In this case, by \cite{GL:01}, we have,
\begin{equation}\label{g7}
0<\underline{Q}_r^{\xi_r}(\mu)\leq\overline{Q}_r^{\xi_r}(\mu)<\infty.
\end{equation}
Now we show that, (\ref{g7}) can also be deduced from Theorem \ref{mthm1} (a). In fact, we have, $p_i=(1-p_0)t_i$ for all $1\leq i\leq N$. Thus,
\begin{eqnarray*}
\sum_{i=1}^N(p_is_i^r)^{\frac{\xi_{1,r}}{\xi_{1,r}+r}}=(1-p_0)^{\frac{\xi_{1,r}}{\xi_{1,r}+r}}\sum_{i=1}^N(t_is_i^r)^{\frac{\xi_{1,r}}{\xi_{1,r}+r}}
=(1-p_0)^{\frac{\xi_{1,r}}{\xi_{1,r}+r}}<1.
\end{eqnarray*}
This implies that $\xi_{1,r}>\xi_{2,r}$ and (\ref{g7}) follows from Theorem \ref{mthm1} (a).
}\end{remark}

\end{document}